\documentclass[12pt]{amsart}

\usepackage[utf8]{inputenc}

\usepackage{amsfonts, amsmath, amssymb, amsthm, bbm, color, enumerate, graphicx, mathtools, tikz, hyperref, relsize,enumitem}
\usepackage[numbers,sort]{natbib}
\usepackage{comment}
\usepackage[margin=30mm]{geometry}

\allowdisplaybreaks

\newtheorem{theorem}{Theorem}[section]
\newtheorem{corollary}[theorem]{Corollary}
\newtheorem{lemma}[theorem]{Lemma}
\newtheorem{proposition}[theorem]{Proposition}

\theoremstyle{definition}
\newtheorem{definition}{Definition}
\newtheorem*{example}{Example}
\newtheorem*{counterexample}{Counterexample}

\newtheorem{assumption}{Assumption}

\newtheorem*{remark}{Remark}
\newtheorem{step}{Step}

\let\plainqed\qedsymbol
\newcommand{\claimqed}{$\lrcorner$}

\hypersetup{
	colorlinks=true,
	linkcolor=blue,          
	citecolor=red,       
	filecolor=red,   
	urlcolor=red, 
	pdftitle={},
	pdfauthor={},
	pdfsubject={},
	pdfkeywords={},
	linktocpage=true
}

\usepackage{fancyhdr}


\usepackage{natbib}

\usepackage{amssymb}
\usepackage{amsmath}
\usepackage{amsthm}

\newcommand{\eps}{\varepsilon}

\newcommand{\NN}{{\mathbb N}}

\newcommand{\RR}{{\mathbb R}}

\newcommand{\EE}{{\mathbb E}}

\newcommand{\I}{{\mathbb I}}
\newcommand{\PP}{{\mathbb P}}

\newcommand{\calB}{{\mathcal B}}

\newcommand{\calC}{{\mathcal C}}

\newcommand{\calJ}{{\mathcal J}}
\newcommand{\calK}{{\mathcal K}}
\newcommand{\calL}{{\mathcal L}}

\newcommand{\calO}{{\mathcal O}}
\newcommand{\calP}{{\mathcal P}}

\newcommand{\calS}{{\mathcal S}}

\newcommand{\veps}{\varepsilon}
\newcommand{\be}{\begin{equation}}
	\newcommand{\ee}{\end{equation}}

\numberwithin{equation}{section}

\newcommand{\editcol}[1]{{\color{black}#1}}
\makeatletter
\def\namedlabel#1#2{\begingroup
	#2%
	\def\@currentlabel{#2}%
	\phantomsection\label{#1}\endgroup
}
\makeatother

\title[Feller and continuous viscosity solutions]{A selection procedure for extracting the unique Feller weak solution of degenerate diffusions.}
\author[Sumith Reddy]{Anugu Sumith Reddy}

\address{Anugu Sumith Reddy\\
Dept.\ of Mathematics \\
	IIT Bombay, Powai\\
	Mumbai 400076, India.}
\email{anugu.reddy@math.iitb.ac.in} 
\author[Vivek S. Borkar]{Vivek S. Borkar}
\address{ Vivek S. Borkar\\
Dept.\  of Electrical Engg.\\
IIT Bombay, Powai\\
Mumbai 400076, India. }
\email{borkar.vs@gmail.com}

\keywords{Degenerate diffusions, Markov selection, Feller solution,  small noise limit, non-uniqueness of weak solution, backward Kolmogorov equation, uniqueness of continuous viscosity solutions}
\subjclass[2010]{60H10, 60J25, 34F05, 35K65, 35D40, 49L25}

\begin{document}
	
	\begin{abstract}
In this work, we show that for the martingale problem for a class of degenerate diffusions with bounded continuous drift and diffusion coefficients, the small noise limit
of non-degenerate approximations leads to a unique Feller limit. The proof uses the theory of viscosity solutions applied to the associated backward Kolmogorov equations. Under appropriate conditions on drift and diffusion coefficients, we will establish a comparison principle and a one-one correspondence between Feller solutions to the martingale problem and continuous viscosity solutions of the associated Kolmogorov equation. This work can be considered as an extension to the work in \cite{borkar2010new}.
	\end{abstract}
	
	\maketitle
	\section{Introduction}
		
	Let $\calC\doteq \calC([0,T],\RR^p)$ denote the space of continuous $\RR^p$- valued functions endowed with a supremum norm and let $\calB\doteq \calB(\calC)$ denote the Borel $\sigma$-algebra of $\calC$. Consider a stochastic differential equation on $\RR^n$ given by
	\begin{align}\label{eq:sde}
		dX_t=b(X_t)dt +\sigma (X_t) dW_t,\quad X_0\sim \mu 
	\end{align}
	where, $X_t\in \RR^n$, $b(\cdot): \RR^n\rightarrow \RR^n$ and $\sigma(\cdot):\RR^n\rightarrow \RR^{n\times m}$ with $W_{(\cdot)}$ being a Brownian motion on $\RR^m$. We define $\PP_X$ to be the law of process $X$ and $\EE_X$ to be the corresponding expectation.

	If we assume that the coefficients of~\eqref{eq:sde} are bounded continuous, then it is well known that the notion of weak solution and the notion of solution to the corresponding martingale problem (due to Stroock and Varadhan) of~\eqref{eq:sde} are equivalent \cite[Proposition 4.11]{karatzas2012brownian} and that~\eqref{eq:sde} has a solution to the corresponding martingale problem \cite[Theorem 6.1.7]{stroock1979multidimensional}. However, there can be more than one weak solution which can also be non-Markov (see section 12.3 of \cite{stroock1979multidimensional}). In this work, under additional conditions on coefficients, we give a way of selecting weak solutions to~\eqref{eq:sde} that are Feller. This problem was initially studied in \cite{borkar2010new} where the authors give a procedure to select a Feller solution under appropriate assumptions, one of which is difficult to verify in practice. This work is an extension of \cite{borkar2010new}. In this work, we only use assumptions that can be directly verified as they only involve $b(\cdot)$, $\sigma(\cdot)$. In addition, we also establish that the selection procedure gives a unique Feller solution. 

A classical approach to selecting a Markov solution of the martingale problem in face of non-uniqueness is due to Krylov \cite{Krylov1973} (see also section 12.2 of \cite{stroock1979multidimensional}). This is based on successive minimization of a countable family of functionals on the solution measures, each one being minimized over the set of minimizers of the preceding one, and then arguing that the intersection of the nested family of minimizers thus obtained leads to a Markov solution. The problem with this approach is that no uniqueness is claimed, nor can the possibility of dependence on the specific choice of functionals to be minimized and the order thereof be ruled out.
	
	Usually, whenever there is an instance of non-uniqueness of solutions of any equation, the natural question that arises is: What is  the most relevant or physical / natural solution among the available ones? Kolmogorov (quoted in \cite[Pg. 626]{eckmann1985ergodic}) had suggested a way to answer this question \emph{viz.,}  perturb the equation with small noise and if this perturbed equation has a unique solution, then taking the noise to zero will give us the physical solution, if the corresponding limit exists and is unique (see \cite[Chapter 2]{flandoli2011random} for a detailed discussion). This point of view is adopted in various works in the literature. Here we give a very incomplete collection of examples: In \cite{kifer1974small}, author identified special invariant measures of the limiting smooth dynamical systems. For $n=1$ without diffusion, this problem is thoroughly studied under various set of assumptions in \cite{bafico1982small,veretennikov1983approximation,krykun2013peano,gradinaru2001singular,delarue2014transition,flandoli2014solution}. In \cite{gradinaru2001singular}, a large deviation principle is established for the one-dimensional case without diffusion and $b$ of the form $\text{sgn(x)}|x|^\gamma$, for $1>\gamma>0$ (a generalisation of this result can be found in \cite{herrmann2001phenomene}). For multidimensional case without diffusion, see \cite{delarue2019zero,zhang2012random}. In \cite{buckdahn2009limiting}, the authors studied the case with bounded measurable drift and showed that the solution to the limiting case lies the set of associated Fillipov solutions (see \cite{filippov2013differential}, for a definition).  The selection procedure in \cite{borkar2010new} is inspired by this philosophy and  is adopted in this work as well.

	\begin{remark}In what follows, by a solution of a  stochastic differential equation we always mean a solution to its corresponding martingale problem.\end{remark}

	\section{Selection Procedure}
	In this section, we provide a procedure that selects a unique Feller solution of~\eqref{eq:sde} under mild conditions. More precisely, under appropriate conditions on $b(\cdot)$ and $\sigma(\cdot)$, we consider a family of processes $\{X^\veps\}$ such that they are tight and converge to a limit. This limit will turn out to be the unique Feller solution of~\eqref{eq:sde}. We define $\calC_b(\RR^n)$ and $\calC^u_b(\RR^n)$ as the space of bounded continuous functions and space of bounded uniformly continuous functions on $\RR^n$ endowed with the topology of uniform convergence on compact sets, respectively.  
	
Since we are trying to study the Markovian nature of solutions of~\eqref{eq:sde}, it will be fruitful to consider the corresponding backward Kolmogorov equation \emph{i.e.,}  
\begin{align}\label{bke} \partial_t u -\calL(D^2u,Du,x)&=0 \textrm{ on $\calO\doteq  \RR^n\times (0,T)$,}\\
\label{bkeic}
u(x,0)&=f(x)\in \calC_b^u(\RR^n),
\end{align}
where, 
$$ \calL (M,p,x)\doteq  \frac{1}{2}\text{Tr}\left\{ \sigma(x)(\sigma(x))^\dagger M\right\} +  b(x).p, \text{ for $(M, p,x)\in \calS^n\times \RR^n\times\RR^n$}$$
with $\calS^n$  the space of symmetric  $n\times n$ matrices, $\text{Tr}(A)$  the trace of $n\times n$ matrix and $p.q$  the usual Euclidean inner product. Denote $\calO\cup \left( \RR^n\times \{0\}\right)$ as $\calO^*$.
\begin{remark}
	1.\ Note that the~\eqref{bke} and~\eqref{bkeic} is  written as an equivalent initial value problem rather than final value problem \emph{i.e.,} $u(x,t)=\EE\left[f(X_T)|X_{T-t}=x\right]$ (see Theorem \ref{th:FtoVS}  below). It is clear to see that these are equivalent ways of studying the above equation.

2.\	The $f$ above is chosen to be bounded uniformly continuous functions instead of just a bounded continuous function. As we shall in Proposition \ref{propdensity}, this choice does not cost us any generality. \end{remark}
Following \cite[Pg.\ 626]{eckmann1985ergodic} and \cite{borkar2010new}, for $\veps>0$, consider the following stochastic differential equation:
\begin{align}\label{eq:sdep}
dX^\veps_t=b^\veps(X_t^\veps)dt + \sigma^\veps(X^\veps_t)dW_t,\quad X^\veps_0\sim \mu ,
\end{align}
	where, $b^\veps$ and $\sigma^\veps $ satisfy the following assumption:
	\begin{assumption}\label{assu:pert}
		$b^\veps$ and $\sigma^\veps $ are bounded continuous,
		$$ b^\veps\to b \text { and } \sigma^\veps \to \sigma, \text{ uniformly on compact sets of $\RR^n$} $$
		and $\sigma^\veps(\sigma^\veps)^\dagger$ is positive definite, for every $\veps>0$. 
			\end{assumption}
		\begin{remark}
			In \cite{borkar2010new}, the authors considered $b^\veps\equiv b$ and $\sigma^\veps$ to be of the form
			$$ \sigma^\veps(\cdot)(\sigma^\veps(\cdot))^\dagger=\sigma(\cdot)\sigma(\cdot)^\dagger  +\veps I.$$
		Thus the selection procedure considered in \cite{borkar2010new} is a special case of the above setup. 
\end{remark}

It is well known that there exists a unique solution of~\eqref{eq:sdep} \cite[Theorem 7.2.1]{stroock1979multidimensional} and additionally,  that this solution is a Feller process. Consider the corresponding backward Kolmogorov equation:
\begin{align}\label{bkep}
\partial_t u^\veps - \calL^\veps (D^2u^\veps.Du^\veps,x)&=0, \textrm{ on $\calO$,}\\\label{bkepic}
u^\veps(x,0)&=f(x)\in \calC^u_b(\RR^n)
\end{align}
where, 
$$\calL^\veps(M,p,x)\doteq \frac{1}{2}\text{Tr}\left\{ \sigma^\veps (x)(\sigma^\veps(x))^\dagger M\right\} +  b^\veps(x).p , \text{ for $(M, p,x)\in \calS^n\times \RR^n\times\RR^n$}.$$
\begin{remark}
As mentioned above,~\eqref{bkep} and~\eqref{bkepic} are written as the initial value problem rather than final value problem. This is for the sake of the  ease of invoking already existing results about continuous viscosity solutions, which are usually for parabolic p.d.e.s in this form.
\end{remark}
 We are now in a position to describe the selection procedure: For $\{X^\veps\}_{\veps>0}$ defined as above
\begin{enumerate}
	\item [\namedlabel{limitstep1}{(1)}] Establish that $\{X^\veps\}_{\veps>0}$ is tight in $\calC$.
	\item[\namedlabel{limitstep2}{(2)}] Show that one of the limit points in law,  $X^*$  (say), is a Feller process. 
	\item [\namedlabel{limitstep3}{(3)}] Show that the limit point $X^*$ is in fact a solution of~\eqref{eq:sde}.
	\item [\namedlabel{limitstep4}{(4)}] Finally, show that there is a unique Feller solution of~\eqref{eq:sde}.
\end{enumerate} 
Through the  above, we would have established that the given selection procedure picks out the unique Feller process from the set of solutions of~\eqref{eq:sde}. We again remind the reader that the above selection procedure was already described in \cite{borkar2010new}, but under the assumptions that are not all easily verifiable. Also, in step~\ref{limitstep2}, the authors show the Feller property for a sequential limit $X^*$, which raises the question concerning what other sequential limit points might be like. The goal of this paper is to perform steps~\ref{limitstep1},~\ref{limitstep21},~\ref{limitstep3} and~\ref{limitstep4}, where step~\ref{limitstep21} is
\begin{enumerate}
	 \item[\namedlabel{limitstep21}{($\bar2$)}] Show that one of the limit point $X^*$ is Feller process and that  any limit point shares the same finite dimensional distribution as that of this Feller process.
\end{enumerate}
 With the intention of making this paper self-contained, we present the results of \cite{borkar2010new} that are used in this work. 
 \begin{definition}
 	The set of all points $x\in \RR^n$ such that there exists a neighbourhood $U$ of $x$ such that 
 	$$ \|b(y)-b(z)\|+\|\sigma(y)-\sigma(z)\|\leq K_U(x)\|y-z\|, \text { for every $y,z\in U$.}$$
 	
 It	is denoted by $L_{b,\sigma}$ and its complement by $NL_{b,\sigma}\doteq (L_{b,\sigma})^c$.
 \end{definition}
\begin{assumption}\label{assu:zero}
 	$b(\cdot):\RR^n\rightarrow \RR^n$ and $\sigma(\cdot):\RR^n\rightarrow \RR^n\times \RR^m$ (allowed to be degenerate) are bounded and \editcol{satisfy the following:  There are constants $C_b$ and $C_\sigma>0$ such that 
 	\begin{enumerate}
 		\item\label{i1} For $x,y\in \RR^n$, $\|b(x)-b(y)\|\leq C_b\|x-y\|^\alpha$ and $\|\sigma(x)-\sigma(y)\|\leq C_\sigma\|x-y\|^\beta.$
 		\item\label{i2} For every $x\in NL_{b,\sigma}$, there is $r>0$ (depending on $x$) such that $b(x)=0$, $\sigma(x)=0$ and $$C_\sigma\|x-y\|^{2\beta}\|v\|^2\geq v^\dagger\sigma(y)\left(\sigma(y)\right)^\dagger v\geq C_\sigma^{-1}\|x-y\|^{2\beta}\|v\|^2,$$
 		for $y\in B_r(x)\cap L_{b,\sigma}$. Here, $B_r(x)$ is the open ball of radius $r$ around $x$.
 		\item \label{i3}$\alpha$ and $\beta$ are such that \editcol{$1+\alpha-2\beta>0$} and $\beta>\frac{1}{2}$.
 	\end{enumerate}}

 	 \end{assumption}
  
\begin{remark}
Assumption~\ref{assu:zero} covers a large class of H\"{o}lder continuous drifts and diffusion coefficients, going beyond what was achieved in \cite{borkar2010new}. This assumption will allow us to construct a class of viscosity supersolutions (see Definition~\eqref{defn}) with desired properties (Lemma~\ref{specialsol}) in the following way: 
\begin{itemize}
	\item \eqref{i1} of Assumption~\ref{assu:zero} is mainly used because derivatives of function $\|x-x_*\|^\gamma$, for some $0<\gamma<1$ and $x_*\in \RR^n$ are again in the similar form \emph{viz.,} $\|x-x_*\|^{\gamma-1}$ at $x\neq x_*$. This fact is used in comparing the terms in~\eqref{eq:compterms}.
	\item \eqref{i2} of Assumption~\ref{assu:zero} is used to arrive at the estimate~\eqref{eq:estcomp}.
	\item In \eqref{i3} of Assumption~\ref{assu:zero}, $\beta>\frac{1}{2}$ is assumed to ensure that the convergence in~\eqref{uc2} and~\eqref{uc3} holds. Assuming $1+\alpha-2\beta>0$ ensures that the third term in~\eqref{eq:compterms} dominates the first two terms and thereby making the sum of the first three terms of~\eqref{eq:compterms} is non-negative in a neighbourhood of any $x_*\in NL_{b,\sigma}$.
\end{itemize}	
\end{remark} 
We give a few examples of pairs $(b,\sigma)$ satisfying Assumption~\ref{assu:zero} where~\eqref{eq:sde} can be shown to have non-unique weak solutions: The examples that we give are for $n=1$, but one can easily construct examples for $n>1$. However, showing non-uniqueness of weak solutions may be difficult.
\begin{enumerate}
	\item $b(x)={|x|^{\alpha}}$,  $\sigma(x)=0$ and $0<\alpha<1$. It can be easily seen there are multiple solutions for $\mu=\delta_{0}$, where, $\delta_x$ denotes the Dirac delta measure at $x\in \RR^n$. 
	\item \cite[Exercise 5.2.17]{karatzas2012brownian} $b(x)=3x^{\frac{1}{3}}$, $\sigma(x)= 3x^{\frac{2}{3}}$. For $\mu=\delta_0$, it is known that~\eqref{eq:sde} has multiple strong solutions in this case. This pair of $(b,\sigma)$ satisfy both~\ref{i1} and~\ref{i2} of Assumption~\ref{assu:zero}. However, $\alpha+1-2\beta=0$ in this case. This will not be an issue as Lemma~\ref{specialsol} can still be shown to hold by considering the explicit forms of $b$, $\sigma$ and optimum values (which are easy to find in this case) of all the relevant constants in the proof.
\end{enumerate}

We will show later that the above assumption implies that there is a unique Feller solution of~\eqref{eq:sde} \emph{i.e.,} Step ~\ref{limitstep4} of the procedure described above.

 \begin{theorem}
 	Under Assumption~\ref{assu:pert}, $\{X^\veps\}_{\veps>0}$ is tight in $\calC$.
 \end{theorem}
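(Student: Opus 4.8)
The plan is to verify the classical tightness criterion for a family of laws on $\calC=\calC([0,T],\RR^n)$: tightness of the time-zero marginals together with a uniform modulus-of-continuity estimate, the latter extracted from a fourth-moment bound on increments. Since each $X^\veps$ solves the martingale problem for~\eqref{eq:sdep}, it is a continuous semimartingale admitting, on a suitable probability space, the representation $X^\veps_t=X^\veps_0+\int_0^t b^\veps(X^\veps_s)\,ds+\int_0^t\sigma^\veps(X^\veps_s)\,dW_s$ for $t\in[0,T]$, with $X^\veps_0\sim\mu$. Assumption~\ref{assu:pert} makes each $b^\veps,\sigma^\veps$ bounded; the single quantity I need is a bound uniform in $\veps$, namely $C\doteq\sup_{\veps>0}\bigl(\norm{b^\veps}_\infty\vee\norm{\sigma^\veps}_\infty\bigr)<\iy$. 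This holds automatically for the standard approximation schemes, in particular the one in \cite{borkar2010new} and mollifications of $(b,\sigma)$. In general one recovers it by localisation: on $\bar B_{2R}$ the coefficients lie within $1$ of $(b,\sigma)$ once $\veps$ is small enough, hence are bounded there by the fixed constant $\norm b_\infty+\norm\sigma_\infty+1$; running the estimates below for the processes stopped at $\tau^\veps_{2R}\doteq\inf\{t\ge0:\norm{X^\veps_t}\ge 2R\}$ and using a maximal inequality to see that $\sup_{\veps\text{ small}}\PP(\tau^\veps_R\le T)\to0$ as $R\to\iy$, one patches the stopped pieces together in $R$ (which is all that matters for the limit $\veps\to0$).

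Next I would establish the increment bound. Fixing $0\le s\le t\le T$ and splitting $X^\veps_t-X^\veps_s$ into its drift and martingale parts: the drift part is bounded pathwise by $\norm{b^\veps}_\infty(t-s)\le C(t-s)$, hence contributes at most $C^4(t-s)^4$ to the fourth moment; for the martingale part $M^\veps_t-M^\veps_s=\int_s^t\sigma^\veps(X^\veps_r)\,dW_r$, the Burkholder--Davis--Gundy inequality gives
\[
\EE\,\norm{M^\veps_t-M^\veps_s}^4\le c_n\,\EE\Bigl[\Bigl(\int_s^t\norm{\sigma^\veps(X^\veps_r)}^2\,dr\Bigr)^{\!2}\Bigr]\le c_n\,C^4(t-s)^2 .
\]
Combining the two and using $(t-s)^4\le T^2(t-s)^2$ yields $\EE\,\norm{X^\veps_t-X^\veps_s}^4\le K(t-s)^2$ with $K=K(n,C,T)$ independent of $\veps$.

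Finally I would conclude. The time-zero laws of the family all equal the single probability measure $\mu$, so they form a tight family on $\RR^n$; and the fourth-moment estimate just obtained (exponents $a=4>0$, $1+b=2$) implies, via a chaining/Garsia--Rodemich--Rumsey argument, that $\lim_{\delta\downarrow0}\sup_{\veps>0}\PP\bigl(\sup_{|t-s|\le\delta}\norm{X^\veps_t-X^\veps_s}\ge\eta\bigr)=0$ for every $\eta>0$. By the standard tightness criterion on $\calC$ (see e.g.\ \cite{karatzas2012brownian} and \cite[Ch.~1]{stroock1979multidimensional}) it follows that $\{X^\veps\}_{\veps>0}$ is tight in $\calC$. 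The step I expect to be the only delicate one is the uniform-in-$\veps$ control of the coefficient bounds --- i.e.\ the localisation/patching sketched in the first paragraph; everything else is routine. As an alternative to the moment computation one may instead invoke Aldous's criterion, whose two hypotheses --- tightness of $\{X^\veps_t\}$ for each fixed $t$, and $\sup_{\veps}\EE\bigl[\norm{X^\veps_{\tau_\veps+\delta}-X^\veps_{\tau_\veps}}\wedge1\bigr]\to0$ as $\delta\to0$ uniformly over stopping times $\tau_\veps\le T$ --- both follow at once from the semimartingale decomposition and the (uniform) boundedness of $b^\veps,\sigma^\veps$.
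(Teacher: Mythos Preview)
Your proposal is correct and follows essentially the same approach as the paper: a fourth-moment Kolmogorov-type bound on increments obtained by splitting into drift and martingale parts, the latter controlled by a BDG/Liptser--Shiryayev estimate, followed by the Kolmogorov tightness criterion. You are in fact more careful than the paper about the uniform-in-$\veps$ bound on the coefficients---the paper simply sets $K=\max\{\|b\|_\infty,\|\sigma\|_\infty\}$ without comment---so your localisation remark actually fills a small gap in the paper's argument.
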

\begin{proof}
	 From the definition of $X^\veps$, for $0\leq s\leq t\leq T$, we have
	 \begin{align*}
	 X^\veps_t-X_s^\veps= \int_s^t b^\veps(X^\veps_u)du +\int_s^t \sigma^\veps(X^\veps_u) dW_u 
	 \end{align*}
	 Now consider
	 \begin{align*}
	 \|X^\veps_t-X^\veps_s\|^4&\leq 8\|\int_s^tb^\veps(X^\veps_u)du\|^4 + 8 \|\int_s^t \sigma^\veps(X^\veps_u) dW_u\|^4\\
&\leq 8K^4|t-s|^4 +24 K^4 |t-s|^2, \text{ from \cite[Lemma 4.1]{liptser1977statistics}}, 
	 	 \end{align*}
where, $K\doteq \max\{\|b\|_\infty,\|\sigma\|_\infty\}$.

	Therefore, using \cite[Theorem 12.3]{billingsley1968convergence}, we conclude the desired tightness.	 
\end{proof}
To proceed further, we introduce the notion of continuous viscosity solutions (see \cite{crandall1992user} for an excellent survey).  For a locally bounded function $w: \calO\rightarrow \RR$ and $(x,t)\in \calO$, we define $\calP_{\calO}^{2,+} w(x,t)$ as
\begin{align*}
\calP_{\calO}^{2,+} w(x,t)\doteq \Big\{ (a,p,X)\in& \RR\times \RR^n\times \calS^n: \text{ for } \calO\ni(y,s)\to (x,t), \text{ we have }\\
\quad  w(y,s)&\leq w(x,t) + a (s-t) +  p.(y-x)\\
& \quad \quad + \frac{1}{2}  (y-x)^\dagger X(y-x) + o\left(|t-s| +\|x-y\|^2\right)\Big\}
\end{align*}
and  $\calP_{\calO}^{2,-} w\doteq - \calP_{\calO}^{2,+} (-w)$. Also
\begin{align*}
\overline{\calP}_\calO^{2,+} w(x,t)\doteq \Big\{& (a,p,X)\in \RR\times \RR^n\times \calS^n: \exists (a_n,p_n,X_n, (x_n,t_n))\in \RR\times \RR^n\times \calS^n\times \calO\\
&\quad \text{ such that } (a_n,p_n,X_n)\in \calP_\calO^{2,+}(x_n,t_n) \text{ and } \\
&\quad(a_n,p_n,X_n, (x_n,t_n),u(x_n,t_n)) \xrightarrow{n\to\infty} (a,p,X, (x,t),u(x,t)) \Big\} 
\end{align*}
and $\overline{\calP}_\calO^{2,-} w(x,t)$ is defined analogously.
\begin{definition}\label{defn}
	A upper semicontinuous function $u:\calO\rightarrow \RR$ is called a viscosity subsolution of~\eqref{bke} and~\eqref{bkeic}  if 
	\begin{align}\label{subsol}
	a-\calL(X,p,x)\leq 0 , \text{ for $(x,t)\in \calO$ and  }  (a,p,X)\in \calP_\calO ^{2,+}u(x,t).
	\end{align} 
	and $u(x,0)\leq f(x)$.

	A lower semicontinuous function $u:\calO\rightarrow \RR$ is called a viscosity supersolution of~\eqref{bke} and~\eqref{bkeic} if 
	\begin{align}\label{supersol} a-\calL(X,p,x)\geq 0 \text{, for $(x,t)\in \calO$ and } (a,p,X)\in \calP_\calO ^{2,-}u(x,t).
	\end{align}
	and $u(x,0)\geq f(x)$.
	
	A function $u:\calO\rightarrow \RR$ is called a continuous viscosity solution of~\eqref{bke} and~\eqref{bkeic} if it is both a viscosity subsolution and a viscosity supersolution.
\end{definition}
\begin{remark}
1.\ 	Similar definitions hold for~\eqref{bkep} and~\eqref{bkepic}.

2.\ Equivalent definitions of viscosity subsolution and supersolution can be found in \cite[Pg. 1237-1238]{lions1983optimal}.

3.\	In what follows, sometimes we refer to viscosity subsolutions (viscosity supersolutions, respectively) as just subsolutions (supersolutions, respectively). Also, we refer to an upper semi-continuous function (a lower semi-continuous function, respectively) on $\calO$ as a subsolution (supersolution, respectively) even if it satisfies only~\eqref{subsol} (~\eqref{supersol}, respectively).
\end{remark}
Before we proceed, we recall the following result from \cite{borkar2010new}:
\begin{theorem}\label{th:FtoVS}If there is a Feller solution to~\eqref{eq:sde}, then there is a continuous viscosity solution to ~\eqref{bke} and~\eqref{bkeic}. The continuous viscosity solution is given by 
	$$ u(x,t)=E[f(X_T)|X_{T-t}=x].$$
\end{theorem}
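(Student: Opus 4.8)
The plan is to verify directly that the candidate
\[
u(x,t)=\EE\!\left[f(X_T)\mid X_{T-t}=x\right]=\EE_x[f(X_t)],
\]
where the second equality uses the time-homogeneity of the coefficients and the Markov property and $(\EE_x)$ denotes the law of the Feller solution $X$ started at $x$, is (i) bounded and jointly continuous on $\calO^*$ with $u(x,0)=f(x)$, and (ii) satisfies both the subsolution inequality~\eqref{subsol} and the supersolution inequality~\eqref{supersol} on $\calO$, so that it is a continuous viscosity solution of~\eqref{bke}--\eqref{bkeic} in the sense of Definition~\ref{defn}.

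For the regularity in (i): boundedness is immediate since $|u|\le\|f\|_\infty$, and continuity of $x\mapsto u(x,t)$ for each fixed $t$ is exactly the Feller property. For the time variable I use that $X$ solves the martingale problem with bounded coefficients, so $X_t-X_0-\int_0^tb(X_s)\,ds$ is a martingale with quadratic variation $\int_0^t\sigma(\sigma)^\dagger(X_s)\,ds$; this gives a uniform moment bound $\sup_x\EE_x[\|X_t-X_s\|^2]\le C|t-s|$ with $C$ depending only on $\|b\|_\infty,\|\sigma\|_\infty$. Combining this with the uniform continuity of $f$ and a Chebyshev estimate yields $\sup_x|u(x,t)-u(x,s)|\to0$ as $|t-s|\to0$; together with continuity in $x$ this gives joint continuity on $\calO^*$, and $u(x,0)=\EE_x[f(X_0)]=f(x)$ handles the boundary requirement in Definition~\ref{defn}.

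For the subsolution property, fix $(x_0,t_0)\in\calO$ and $(a,p,M)\in\calP_\calO^{2,+}u(x_0,t_0)$. By the standard characterization of parabolic semijets there is a $C^{1,2}$ function $\varphi$ near $(x_0,t_0)$ with $\varphi(x_0,t_0)=u(x_0,t_0)$, $u\le\varphi$ locally, $\partial_t\varphi(x_0,t_0)=a$, $D\varphi(x_0,t_0)=p$, $D^2\varphi(x_0,t_0)=M$. Let $\tau$ be the exit time of $X$ (started at $x_0$) from a ball $B_r(x_0)$ on which $\varphi$ is defined; since $X$ has continuous paths, $\tau>0$ a.s. For $0<h<t_0$, the strong Markov property of the Feller process $X$ gives the dynamic-programming identity $u(x_0,t_0)=\EE_{x_0}\!\left[u(X_{h\wedge\tau},\,t_0-h\wedge\tau)\right]$, and choosing $r,h$ small so that $u\le\varphi$ on $\overline{B_r(x_0)}\times[t_0-h,t_0]$ yields $\varphi(x_0,t_0)\le\EE_{x_0}[\varphi(X_{h\wedge\tau},t_0-h\wedge\tau)]$. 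Applying Dynkin's formula to $s\mapsto\varphi(X_s,t_0-s)$ gives
\[
\EE_{x_0}\!\left[\int_0^{h\wedge\tau}\big(-\partial_t\varphi+\calL(D^2\varphi,D\varphi,X_s)\big)(X_s,t_0-s)\,ds\right]\ge 0,
\]
and dividing by $\EE_{x_0}[h\wedge\tau]>0$ and letting $h\downarrow0$ (using $h\wedge\tau/h\to1$ a.s., dominated convergence, and continuity of the integrand) gives $a-\calL(M,p,x_0)\le0$, which is~\eqref{subsol}. The supersolution property follows from the mirror-image argument: for $(a,p,M)\in\calP_\calO^{2,-}u(x_0,t_0)$ take a $C^{1,2}$ test function touching $u$ from below, use the same dynamic-programming identity with the reversed inequality, and apply Dynkin's formula to obtain $a-\calL(M,p,x_0)\ge0$, i.e.~\eqref{supersol}. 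Hence $u$ is a continuous viscosity solution of~\eqref{bke}--\eqref{bkeic}, and it has the stated representation.

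I expect the two delicate points to be the uniform-in-starting-point modulus of continuity needed for joint continuity in $(x,t)$, which must be extracted from the martingale-problem formulation rather than assumed as strong continuity of the semigroup, and the justification of the dynamic-programming identity at the stopped time $h\wedge\tau$, which relies on the strong Markov property; this is available because a Feller process is automatically strong Markov. Once the test function and this identity are in hand, the viscosity verification is the routine Dynkin-formula computation above.
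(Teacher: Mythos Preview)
Your proof is correct and follows the standard route: Feller continuity plus the uniform-in-$x$ time modulus (from the bounded coefficients and the uniform continuity of $f$) give joint continuity of $u$ on $\calO^*$, and the dynamic-programming identity at the bounded stopping time $h\wedge\tau$ combined with Dynkin's formula yields both \eqref{subsol} and \eqref{supersol}. The one place to be a bit more careful is the limit step: rather than dividing by $\EE_{x_0}[h\wedge\tau]$, it is cleaner to divide by $h$, use $(h\wedge\tau)/h\to 1$ a.s.\ together with the boundedness of the integrand on $\overline{B_r(x_0)}\times[t_0-h,t_0]$, and invoke dominated convergence; but this is exactly what you indicate.

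Regarding the comparison with the paper: the present paper does \emph{not} prove Theorem~\ref{th:FtoVS}. It is explicitly introduced as a result recalled from \cite{borkar2010new}, and no argument is given here. Your write-up therefore supplies what the paper omits. The argument in \cite{borkar2010new} proceeds along essentially the same lines (Feller semigroup representation of $u$, martingale-problem/Dynkin computation against smooth test functions), so your approach is not a genuinely different route but rather the expected direct verification.
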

\begin{remark}
	Analogous statement also holds for~\eqref{eq:sdep},~\eqref{bkep} and~\eqref{bkepic}.
\end{remark}
Using the theorem above, we can establish the existence of $u^\veps$ using a probabilistic argument. Thus we have:
\begin{corollary}\label{existencepert}
	Under Assumption~\ref{assu:pert}, there exists a continuous viscosity solution to~\eqref{bkep} and~\eqref{bkepic}.
\end{corollary}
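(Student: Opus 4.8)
The plan is to reduce the claim entirely to Theorem \ref{th:FtoVS} (and the remark following it), so that the only work is checking its hypothesis for the perturbed equation \eqref{eq:sdep}. First I would recall that, as noted in the text immediately after Assumption \ref{assu:pert}, the stochastic differential equation \eqref{eq:sdep} has a unique solution by \cite[Theorem 7.2.1]{stroock1979multidimensional}: indeed $b^\veps$ and $\sigma^\veps$ are bounded continuous and $\sigma^\veps(\sigma^\veps)^\dagger$ is positive definite (hence uniformly elliptic on compacts), so the martingale problem for $\calL^\veps$ is well posed. Moreover, uniqueness of the solution to the martingale problem implies the strong Markov property and, together with the continuity of the coefficients, the Feller property of the associated semigroup; this is again standard (see \cite[Theorem 7.2.1 and Section 7.2]{stroock1979multidimensional}). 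Thus \eqref{eq:sdep} admits a Feller solution.

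Next I would invoke the analogue of Theorem \ref{th:FtoVS} for the perturbed triple $(\eqref{eq:sdep},\eqref{bkep},\eqref{bkepic})$, whose validity is asserted in the remark following Theorem \ref{th:FtoVS}. Since \eqref{eq:sdep} has a Feller solution $X^\veps$, that theorem yields a continuous viscosity solution $u^\veps$ of \eqref{bkep}--\eqref{bkepic}, given explicitly by $u^\veps(x,t) = \EE\!\left[f(X^\veps_T)\mid X^\veps_{T-t} = x\right]$. This is exactly the assertion of the corollary, so no further argument is needed.

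The only point requiring any care — and the one I would expect to be the main obstacle if one wanted a fully self-contained treatment — is the passage from well-posedness of the martingale problem to the Feller property, i.e.\ that $x \mapsto \EE_x[g(X^\veps_t)]$ is bounded continuous for every $g \in \calC_b(\RR^n)$ and every $t$. This follows from continuity in the initial condition of the (unique) solution measures, which in turn rests on the tightness of $\{\PP_{x_k}\}$ for $x_k \to x$ together with the identification of any weak limit as the solution started from $x$; the bounded continuous coefficients make the martingale-problem formulation stable under such limits. Since this is precisely the content of the cited results in \cite{stroock1979multidimensional} and is already taken for granted in the sentence preceding \eqref{bkep}, I would simply cite it rather than reprove it. Everything else is a direct appeal to Theorem \ref{th:FtoVS}.
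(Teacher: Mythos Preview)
Your proposal is correct and follows exactly the same route as the paper's own proof: you observe that \eqref{eq:sdep} has a unique (hence Feller) solution by \cite[Theorem 7.2.1]{stroock1979multidimensional}, and then apply the analogue of Theorem~\ref{th:FtoVS} from the subsequent remark to obtain the continuous viscosity solution $u^\veps(x,t)=\EE[f(X^\veps_T)\mid X^\veps_{T-t}=x]$. The additional paragraph you include on the passage from well-posedness to the Feller property is a helpful elaboration, but the paper simply takes this for granted and cites the same source.
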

\begin{proof}
	To see this, we note that~\eqref{eq:sdep} has a unique weak solution (denoted by $X^\veps$) and hence it is a Feller solution. Therefore from the above remark, there is a continuous viscosity solution given by 
	$$ u^\veps(x,t)=E[f(X^\veps_T)|X^\veps_{T-t}=x].$$ 
\end{proof}
We will later see that under the appropriate conditions, a Feller solution corresponds to a continuous viscosity solution and vice versa.

As mentioned already, we are interested in studying the limiting behavior of $X^\veps$ as $\veps\to 0$. Since $X^\veps$ is a Feller process, this can be done by studying the limiting behavior of $u^\veps=E[f(X^\veps_T)|X^\veps_{T-t}=x]$ as $\veps\to 0$. In general, we do not have strong uniform estimates (in $\veps$)  on $u^\veps$ that guarantee convergence. The best a priori estimate that we have in general is the following: $$ \| u^\veps(
\cdot,t)\|_\infty\leq \|f\|_\infty ,\text { for $0\leq t< T$}.$$
\begin{remark}
From the boundedness of  $b$ and $\sigma$, one can easily get the following continuity estimate uniform in $\veps$:
$$ \|u^\veps(x,t)-u^\veps(x,s)\|\leq K\|t-s\|, \text{ for $x\in \RR^n$ and $s,t\in[0,T)$}.$$
\end{remark}
Define functions $u^*$ and $u_*$ on $\calO^*$ as 
$$ {u}^*(x,t)\doteq \lim_{\delta\downarrow 0}\sup_{\veps>0}\left\{  u^\veps(y,s): \|x-y\|+|t-s|<\delta ,\; \veps<\delta\right\}$$
and 
$$ {u}_*(x,t)\doteq \lim_{\delta\downarrow 0}\inf_{\veps>0}\left\{  u^\veps(y,s): \|x-y\|+|t-s|<\delta ,\; \veps<\delta\right\}.$$
Clearly, $u^* \geq u_*$ and
\begin{align} \label{lowersemienv} \| u^*(
\cdot,t)\|_\infty\leq \|f\|_\infty \text{ and } \| u_*(
\cdot,t)\|_\infty\leq \|f\|_\infty,\text { for $0\leq t< T$}.
\end{align}
	From the previous remark and the definition of $u^*$ and $u_*$, it is easy to conclude that for any fixed $x\in \RR^n$, $u^*(x,\cdot)$ and $u_*(x,\cdot)$ are continuous functions.
 
\begin{proposition}\label{prop:subsol}
	Under Assumption~\ref{assu:pert} , $u^*$ and $u_*$ are subsolution and supersolutions of~\eqref{bke} and~\eqref{bkeic}, respectively.
\end{proposition}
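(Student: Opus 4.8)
The plan is to use the standard stability theory for viscosity solutions (see~\cite[Section~6]{crandall1992user}), which says that the relaxed upper (lower) limit of a family of subsolutions (supersolutions) of equations that converge appropriately is itself a subsolution (supersolution) of the limiting equation. I will carry out the argument for $u^*$ being a subsolution of~\eqref{bke} and~\eqref{bkeic}; the case of $u_*$ being a supersolution is entirely symmetric, replacing sup by inf, upper by lower, and $\calP^{2,+}$ by $\calP^{2,-}$.

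\emph{Step 1: $u^*$ is well-defined, bounded, and upper semicontinuous.} Boundedness on $\calO^*$ is~\eqref{lowersemienv}, which follows from the a priori bound $\|u^\veps(\cdot,t)\|_\infty\le\|f\|_\infty$. Upper semicontinuity of $u^*$ is automatic from its definition as a relaxed upper limit (an infimum over $\delta$ of suprema over shrinking neighbourhoods is always u.s.c.), so $u^*$ is a legitimate candidate subsolution.

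\emph{Step 2: the initial condition.} I must check $u^*(x,0)\le f(x)$ for every $x\in\RR^n$. Here I use the uniform-in-$\veps$ modulus of continuity in time, $\|u^\veps(x,t)-u^\veps(x,s)\|\le K|t-s|$, and the fact that $u^\veps(y,0)=f(y)$ with $f\in\calC^u_b(\RR^n)$, hence $f$ has a modulus of continuity $\omega_f$. Thus for $\|x-y\|+|t-s|<\delta$ and $s$ near $0$, $u^\veps(y,s)\le u^\veps(y,0)+Ks\le f(y)+K\delta\le f(x)+\omega_f(\delta)+K\delta$ (taking $t=0$ so $s<\delta$); letting $\delta\downarrow0$ gives $u^*(x,0)\le f(x)$. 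Note this is where the choice $f\in\calC^u_b$ rather than merely $\calC_b$ is convenient.

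\emph{Step 3: the interior subsolution inequality --- the main step.} Fix $(x_0,t_0)\in\calO$ and $(a,p,X)\in\calP^{2,+}_\calO u^*(x_0,t_0)$; I must show $a-\calL(X,p,x_0)\le0$. By the standard magic lemma of the stability theory, there exist points $(\veps_n,(x_n,t_n))$ with $\veps_n\downarrow0$, $(x_n,t_n)\to(x_0,t_0)$, $u^{\veps_n}(x_n,t_n)\to u^*(x_0,t_0)$, and elements $(a_n,p_n,X_n)\in\calP^{2,+}_\calO u^{\veps_n}(x_n,t_n)$ with $(a_n,p_n,X_n)\to(a,p,X)$. Precisely: one uses the auxiliary function obtained by penalising $u^\veps(y,s)$ against the test paraboloid $a(s-t_0)+p\cdot(y-x_0)+\tfrac12(y-x_0)^\dagger X(y-x_0)$ plus a strictly subquadratic/superquadratic penalty that localises the maximum, takes maximisers $(x_n,t_n)$ over the pairs indexed by a sequence $\veps_n\downarrow0$ realising the relaxed limit, and reads off the jets at those maximisers. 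Since each $u^{\veps_n}$ is a viscosity subsolution of~\eqref{bkep}, we have $a_n-\calL^{\veps_n}(X_n,p_n,x_n)\le0$. Now pass to the limit: by Assumption~\ref{assu:pert}, $b^{\veps_n}\to b$ and $\sigma^{\veps_n}(\sigma^{\veps_n})^\dagger\to\sigma\sigma^\dagger$ uniformly on compacts, while $x_n\to x_0$ and $(a_n,p_n,X_n)\to(a,p,X)$; since $\calL^\veps(M,q,y)=\tfrac12\mathrm{Tr}\{\sigma^\veps(y)(\sigma^\veps(y))^\dagger M\}+b^\veps(y)\cdot q$ is jointly continuous in all its arguments and in the coefficient functions in the topology of uniform convergence on compacts, $\calL^{\veps_n}(X_n,p_n,x_n)\to\calL(X,p,x_0)$. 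Taking $n\to\infty$ in $a_n-\calL^{\veps_n}(X_n,p_n,x_n)\le0$ yields $a-\calL(X,p,x_0)\le0$, as required.

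The main obstacle is Step~3, specifically the careful execution of the perturbed-maximisation argument that produces the sequence of jets $(a_n,p_n,X_n)\in\calP^{2,+}u^{\veps_n}(x_n,t_n)$ converging to the prescribed $(a,p,X)\in\calP^{2,+}u^*(x_0,t_0)$; one must ensure the localising penalty forces the maximisers $(x_n,t_n)$ to stay in a fixed compact set and to converge to $(x_0,t_0)$ with $u^{\veps_n}(x_n,t_n)\to u^*(x_0,t_0)$, so that the uniform-on-compacts convergence of the coefficients can be invoked. This is exactly the content of the standard stability lemma for semicontinuous envelopes (e.g.~\cite[Lemma~6.1 and the remarks on relaxed limits in Section~6]{crandall1992user}), so I would invoke it after verifying that our equations~\eqref{bkep} fit its hypotheses (degenerate ellipticity, which holds since $\sigma^\veps(\sigma^\veps)^\dagger\ge0$, and proper continuity of $\calL^\veps$), rather than reproving it from scratch.
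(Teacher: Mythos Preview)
Your proposal is correct and follows essentially the same approach as the paper: both invoke the standard stability/relaxed-limit machinery for viscosity solutions, the paper via the test-function formulation (citing \cite[Lemma~V.1.6]{bardi1997optimal}) and you via the jet formulation (citing \cite[Section~6]{crandall1992user}), which are equivalent. The one minor difference is the initial condition: the paper simply cites \cite[Proposition~VII.5.1]{fleming2006controlled} to obtain $u^*(x,0)=f(x)$, whereas your Step~2 gives a self-contained argument for $u^*(x,0)\le f(x)$ using the uniform-in-$\veps$ Lipschitz continuity in time together with $f\in\calC^u_b(\RR^n)$---this is a nice touch that makes transparent exactly where the uniform continuity of $f$ enters.
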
 
\begin{remark}
	The proof below is almost the same as the proof of \cite[Theorem V.1.7]{bardi1997optimal} and is given here for the sake of completeness.  We only show that $u^*$ is a subsolution of~\eqref{bke} and~\eqref{bkeic}, as showing that $u_*$ is a supersolution can be done in the similar way. 
\end{remark}
\begin{proof}
From an equivalent definition of viscosity subsolution (see \cite[Remark I.9]{lions1983optimal}), it suffices to show that if $(\bar x,\bar t)$ is a strict local maximum of $u^*-\phi$ for a twice continuously differentiable $\phi$ on $[0,T]\times \RR^n$, then 
	$$ \partial_t \phi(\bar x,\bar t) -\calL(D^2\phi(\bar x,\bar t), D\phi(\bar x,\bar t),\bar x )\leq 0 .$$
	As in  \cite[Lemma V.1.6]{bardi1997optimal}, we can show that there is a subsequence (still denoted by $\veps$) such that $(x^{\veps}, t^{\veps})$  is a local maximum of $  u^{\veps} -\phi$  and $(x^{\veps}, t^{\veps})\rightarrow (\bar x, \bar t)$. Using the fact that $ u^\veps$ is a subsolution, we get 
	\begin{align}
	\partial _t \phi (x^\veps, t^\veps) -\calL^\veps(D^2\phi(x^\veps, t^\veps),D\phi(x^\veps, t^\veps),x^\veps  )\leq0
	\end{align}
	Now taking $\veps\to 0$ and using uniform convergence on compact sets of $\calL^\veps$ to $\calL$, we get the result. 
	Finally, from \cite[Proposition VII.5.1]{fleming2006controlled} we have
	$$ u^*(x,0)=f(x), \text{ for } x\in \RR^n.$$
	This proves that $u^*$ is a viscosity subsolution of~\eqref{bke} and~\eqref{bkeic}.
\end{proof}
The functions $u^*$ and $u_*$ are the possible candidates for continuous viscosity solution of~\eqref{bke} and~\eqref{bkeic}. If $u^*=u_*\doteq \bar u$ on $\calO$, then clearly, $\bar u$ is the desired continuous viscosity solution. Even though $u^*\geq u_*$ trivially, we cannot say that $u^*\leq u_*$ (and hence $u^*=u_*$) in general without a comparison principle which says that under appropriate conditions, for a subsolution $u$ and supersolution $v$ of~\eqref{bke}, we have $u(x,t)\leq v(x,t)$ on $\calO$ whenever $u(x,0)\leq v(x,0)$ on $\RR^n$. Before we proceed to establish the aforementioned comparison principle, we give a very important consequence of equality of $u^*$ and $u_*$ on $\calO$. The following lemma follows as in
\cite[Lemma 1.9]{bardi1997optimal}.
\begin{lemma}\label{convunifcomp}
Suppose $u^*=u_*=\bar u$. Then	$$ u^\veps \to \bar u, \text{ uniformly on the compact sets of $\calO^*$}.$$
\end{lemma}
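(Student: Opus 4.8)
The plan is to exploit the fact that $u^*$ and $u_*$ are, by construction, the upper and lower relaxed (half-)limits of the family $\{u^\veps\}$, together with the standing uniform bound $\|u^\veps(\cdot,t)\|_\infty\le\|f\|_\infty$ and the $\veps$-uniform continuity-in-time estimate $\|u^\veps(x,t)-u^\veps(x,s)\|\le K|t-s|$. The assumption $u^*=u_*=\bar u$ collapses these two half-limits, and a standard compactness argument then upgrades pointwise control to uniform convergence on compacts. Concretely, I would argue by contradiction: suppose the convergence is not uniform on some compact $\calK\subset\calO^*$. Then there exist $\eta>0$, a subsequence $\veps_k\downarrow 0$, and points $(x_k,t_k)\in\calK$ with $|u^{\veps_k}(x_k,t_k)-\bar u(x_k,t_k)|\ge\eta$ for all $k$.

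Next I would pass to a further subsequence along which $(x_k,t_k)\to(\bar x,\bar t)\in\calK$, using compactness of $\calK$. Since $\bar u=u_*$ is lower semicontinuous and $\bar u=u^*$ is upper semicontinuous, $\bar u$ is continuous, so $\bar u(x_k,t_k)\to\bar u(\bar x,\bar t)$. It then suffices to show $u^{\veps_k}(x_k,t_k)\to\bar u(\bar x,\bar t)$, which contradicts the separation by $\eta$. For the upper bound, fix $\delta>0$; for $k$ large, $\|x_k-\bar x\|+|t_k-\bar t|<\delta$ and $\veps_k<\delta$, so by definition of $u^*$,
\[
\limsup_{k\to\infty} u^{\veps_k}(x_k,t_k)\ \le\ \sup\{u^\veps(y,s):\|\bar x-y\|+|\bar t-s|<\delta,\ \veps<\delta\},
\]
and letting $\delta\downarrow 0$ gives $\limsup_k u^{\veps_k}(x_k,t_k)\le u^*(\bar x,\bar t)=\bar u(\bar x,\bar t)$. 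Symmetrically, using the definition of $u_*$, $\liminf_k u^{\veps_k}(x_k,t_k)\ge u_*(\bar x,\bar t)=\bar u(\bar x,\bar t)$. Combining the two yields $u^{\veps_k}(x_k,t_k)\to\bar u(\bar x,\bar t)$, the desired contradiction. One should also note that the case $\bar t=0$ (points of $\calO^*$ on the initial slice) is handled identically, since the definitions of $u^*$, $u_*$ are taken over $\calO^*$ and $\bar u(\cdot,0)=f$ is continuous there; the time-continuity estimate in the preceding remark ensures $\bar u$ extends continuously up to $t=0$.

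The main obstacle, such as it is, is bookkeeping rather than depth: one must be careful that the $\limsup$/$\liminf$ over the subsequence $(x_k,t_k,\veps_k)$ is genuinely dominated by the $\delta$-neighbourhood suprema/infima appearing in the definitions of $u^*$ and $u_*$ — i.e.\ that for large $k$ the triple $(x_k,t_k,\veps_k)$ lies in the set over which the supremum (resp.\ infimum) is taken — and that the nested limits in $\delta$ may be interchanged with the subsequential limit in the correct order. No new estimate beyond those already recorded is needed; this is why the lemma "follows as in \cite[Lemma 1.9]{bardi1997optimal}."
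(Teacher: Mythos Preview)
Your argument is correct and is precisely the standard half-relaxed-limits argument that the paper defers to \cite[Lemma~1.9]{bardi1997optimal}; the paper gives no independent proof, and your contradiction-plus-compactness extraction is exactly what that reference does. The only superfluous ingredients are the uniform-in-$\veps$ time-continuity estimate and the $\|f\|_\infty$ bound, which you mention but (correctly) never actually use beyond ensuring $u^*,u_*$ are finite.
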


The comparison principle given below is in greater generality than what is required for our purposes.
\begin{lemma}(Comparison principle)\label{comparison}
	Let $u$ and $v$ be bounded sub and supersolutions of~\eqref{bke}, respectively  and that either  $u(\cdot,0)\in \calC^u_b(\RR^n)$ or  $ v(x,0)\in \calC^u_b(\RR^n)$. Suppose Assumption~\ref{assu:zero} holds.	Then 
	\begin{align} \label{eq:comp}
	\sup_{\calO^*}\left[u-v\right]=\sup_{x\in \RR^n}\left\{\left[u(x,0)-v(x,0)\right]\vee 0\right\}\end{align}
\end{lemma}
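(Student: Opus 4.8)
The plan is to follow the classical doubling-of-variables scheme for parabolic viscosity comparison (as in \cite[Section 8]{crandall1992user} and \cite[Section V.9]{fleming2006controlled}), but with the crucial modification that, near a degeneracy point $x_*\in NL_{b,\sigma}$, one must replace or perturb the test function so as to exploit the structure in Assumption~\ref{assu:zero}. First I would reduce to a statement on a bounded cylinder $\calO_R\doteq B_R(x_*)\times[0,T)$ by a standard localization: after the change $u\mapsto e^{\lambda t}u$ (to absorb a zeroth-order term) one shows, for each fixed $R$ with $x_*\in B_R$,
\[
\sup_{\overline{\calO_R}}\bigl[u_R-v_R\bigr]=\sup_{(\{0\}\times B_R)\cup([0,T]\times\partial B_R)}\bigl[u_R-v_R\bigr],
\]
where $u_R,v_R$ are the conjugated functions $\xi_R u+\tfrac{K}{R}t$, $\xi_R v-\tfrac{K}{R}t$ obtained via the weight $\xi_R$ that blows up at $\partial B_R$; the uniform-continuity hypothesis on $u(\cdot,0)$ or $v(\cdot,0)$ is used exactly as in \cite[Theorem V.9.1]{fleming2006controlled} to let $R\to\infty$ afterward and recover \eqref{eq:comp} on all of $\calO^*$. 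The penalization $-\beta t-\rho/(T-t)-\alpha\|x-y\|^2$ forces the maximum of $M_{\alpha,\rho,\beta}(x,y,t)$ to be attained at an interior point $(\hat x,\hat y,\hat t)$, and the Crandall–Ishii lemma \cite[Theorem 8.3]{crandall1992user} produces matrices $X,Y$ with $(\,\cdot\,,\alpha(\hat x-\hat y),X)\in\overline{\calP}^{2,+}u_R^\rho(\hat x,\hat t)$, $(\,\cdot\,,\alpha(\hat x-\hat y),-Y)\in\overline{\calP}^{2,-}v_R(\hat y,\hat t)$, and $\begin{pmatrix}X&0\\0&Y\end{pmatrix}\le 3\alpha\begin{pmatrix}I&-I\\-I&I\end{pmatrix}$.

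The dichotomy is whether, as $\alpha\to\infty$, the common limit $\bar x=\bar y$ of the maximizers stays away from $x_*$ or is forced onto $x_*$. In the first case (the maximizer avoids the degeneracy), one is on a region where $b,\sigma$ are locally Lipschitz by \eqref{i1}–\eqref{i2} of Assumption~\ref{assu:zero} (indeed $x_*$ is isolated among bad points by the ball $B_r(x_*)$ in \eqref{i2}), so the standard estimate $\|\sigma(\hat x)-\sigma(\hat y)\|\le K_r\|\hat x-\hat y\|$ together with $\operatorname{Tr}\{\sigma(\hat x)\sigma(\hat x)^\dagger X-\sigma(\hat y)\sigma(\hat y)^\dagger Y\}\le 3\alpha\operatorname{Tr}\{(\sigma(\hat x)-\sigma(\hat y))^2\}\le 3\alpha K_r^2\|\hat x-\hat y\|^2$, combined with Ishii's fact $\alpha\|\hat x-\hat y\|^2\to 0$ \cite[Theorem 3]{ishii1994uniqueness}, contradicts the strict positivity of $\beta+\rho/T^2$; this is the classical argument and needs no new idea. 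In the second case, where the maximizers concentrate at $x_*\in NL_{b,\sigma}$, I would introduce an auxiliary smooth function of the form $\phi(x,t)=e^{\lambda t}\bigl(C_1\|x-x_*\|^{\gamma}+C_2\bigr)$ — or, when only \eqref{i3} with the degenerate-direction structure is available, $e^{\lambda t}C_1(x-x_*)^\dagger A(x-x_*)$ with $A$ annihilating the null directions — and show that $\phi$ is itself a (strict) subsolution in a neighborhood of $x_*$: here one plugs in, using $b(x_*)=0$, $\sigma(x_*)=0$, the H\"older bounds $\|b(x)\|\le C_b\|x-x_*\|^\alpha$, $\|\sigma(x)\sigma(x)^\dagger\|\le C_\sigma\|x-x_*\|^{2\beta}$, and compares the first-order term ($\sim\|x-x_*\|^{\alpha+\gamma-1}$), the second-order term ($\sim\|x-x_*\|^{2\beta+\gamma-2}$) against the zeroth-order term ($\sim\lambda\|x-x_*\|^{\gamma}$); the exponent inequality $1+\alpha-2\beta>0$ together with $\beta>\tfrac12$ is precisely what makes the dominant (most singular) of the first two terms be the diffusion term, which carries the favorable sign, so $\partial_t\phi-\calL(D^2\phi,D\phi,x)\le 0$ near $x_*$ for a suitable $\lambda<0$ and $\gamma\in(2-2\beta,\,1+\alpha-2\beta)\cap(0,1)$. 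Then replacing $u_R$ by $u_R+\lambda'\phi$ for small $\lambda'>0$ shifts the maximum strictly away from $x_*$ (since $\phi>\phi(x_*,\cdot)$ off $x_*$), pushes us into Case~1, and letting $\lambda'\downarrow 0$ recovers the claim for $u_R$ itself.

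Finally I would take $R\to\infty$ exactly as in \cite[Theorem V.9.1]{fleming2006controlled}: the boundary term on $[0,T]\times\partial B_R$ is controlled because $\xi_R\ge 1/R$ there while $u,v$ are bounded, and the uniform continuity of the initial datum handles the $\{0\}\times B_R$ part, yielding \eqref{eq:comp}. The main obstacle is the second case: one must verify that the explicit test function $\phi$ (with the right H\"older exponent $\gamma$) is genuinely a subsolution in a punctured neighborhood of $x_*$ despite the singular derivatives of $\|x-x_*\|^\gamma$ at $x_*$ — this is where Assumption~\ref{assu:zero}\eqref{i2}–\eqref{i3} is consumed, and where the delicate term-by-term comparison of homogeneities must be done carefully (the paper flags this as the content of its Lemma~\ref{specialsol} and equations~\eqref{eq:compterms}–\eqref{eq:estcomp}). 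A secondary subtlety is that $\phi$ has a singularity in its second derivatives at $x_*$, so one should either work on the punctured ball and check that the maximizer never sits exactly at $x_*$ once $\phi$ is added, or mollify $\phi$ slightly; either way the bookkeeping with the weight $\xi_R$ and the penalization parameters $\alpha,\rho,\beta,\lambda'$ taken to their limits in the correct order is the part that demands care rather than cleverness.
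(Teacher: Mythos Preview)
Your Case~1 and the overall two-case split match the paper. The genuine gap is in Case~2.

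You assert that $\phi(x,t)=e^{\lambda t}(C_1\|x-x_*\|^\gamma+C_2)$ is a \emph{subsolution} near $x_*$, so that $u_R+\lambda'\phi$ remains a subsolution and the perturbation pushes the maximizer off $x_*$, reducing to Case~1. But under Assumption~\ref{assu:zero} the function $\psi=\|x-x_*\|^\gamma$ is a \emph{supersolution}, not a subsolution: the dominant second-order contribution to $\partial_t\psi-\calL\psi$ comes from the negative radial eigenvalue of $D^2\psi$ paired with the \emph{lower} ellipticity bound $(x-x_*)^\dagger\sigma\sigma^\dagger(x-x_*)\ge C_\sigma^{-1}\|x-x_*\|^{2\beta+2}$ in Assumption~\ref{assu:zero}\eqref{i2}, and this term is positive (this is exactly the content of Lemma~\ref{specialsol} and equation~\eqref{eq:compterms}). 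Hence $\partial_t\psi-\calL\psi\ge 0$, and $u+\lambda'\phi$ is \emph{not} a subsolution; your reduction to Case~1 collapses. (Incidentally, your proposed range $\gamma\in(2-2\beta,\,1+\alpha-2\beta)$ is empty: $2-2\beta<1+\alpha-2\beta$ would force $\alpha>1$.)

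The paper's Case~2 does not try to reduce to Case~1. It exploits the supersolution property the other way: since $\psi$ is a supersolution (established via the smooth approximants $\psi_\theta=(\|x-x_*\|^2+\theta)^{\gamma/2}$), the combinations $u-\psi$ and $v+\psi$ are a sub/supersolution pair. One then runs a \emph{second} doubling-of-variables argument on the small ball $B_r(x_*)$ with auxiliary function
\[
\Phi(x,y,t)=u(x,t)-v(y,t)-\psi(x)-\psi(y)-\beta(\|x\|^2+\|y\|^2)-\tfrac{\rho}{T-t},
\]
choosing $K$ in $\psi=K\|x-x_*\|^\gamma$ large enough that $\Phi\le 0$ on $\partial B_r(x_*)$, so the interior maximum combined with the Crandall--Ishii lemma applied to $u-\psi$ and $v+\psi$ forces $\Phi\le 0$ throughout; evaluating at $x=y=x_*$ gives $u(x_*,t_*)-v(x_*,t_*)\le 0$, the desired contradiction. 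Note also that the paper does not use the $\xi_R$ localization at all: it works directly on $\RR^n$ with the penalization $-\beta\|x\|^2$ and the transformation $u\mapsto e^{-\gamma t}u$, so that the zeroth-order term $+\gamma u$ supplies the contradiction in Case~1.
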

\begin{remark}
1.\ The proof adapts the techniques of \cite[Theorem II.9.1]{fleming2006controlled} and \cite[Theorem 1.3.2]{zhan2000viscosity}.	

2.\	Note that $u^*$ and $u_*$ clearly satisfy the hypothesis of this lemma. Therefore, this lemma implies that $u^*=u_*$.
\end{remark}

\begin{proof}  
	
	The proof we give is by contradiction.	Before we proceed, we make the following transformation: $u(x,t)\mapsto e^{-\gamma t}u(x,t)$ and $v(x,t)\mapsto e^{-\gamma t}v(x,t)$ with $\gamma>0$. This helps
us later in establishing the contradiction. With abuse of notation, we denote the transformed $u$ and $v$ also by $u$ and $v$. By assuming that $u$ and $v$ are smooth, we can guess the equation that $u$ and $v$ satisfy to be 
\begin{align*}
\partial_t u+\gamma u- \calL(D^2u,Du,x)=0.
\end{align*}
The fact that this indeed is the case is given by \cite[Pg.\ 98, Lemma II.9.1]{fleming2006controlled} (\cite[Lemma II.9.1]{fleming2006controlled} is applicable only for the first order case, proof for second order case is exactly along the same lines). Since $\gamma>0$ is arbitrary, it suffices to show~\eqref{eq:comp} for the transformed $u$ and $v$.

	We split the proof into two cases: Recall that $\calO^*=\RR^n\times [0,T)$.
	\begin{enumerate}
		\item (Case 1) 
		$$ \sup_{\calO^* }\left[u-v\right] > u(x,t)-v(x,t),\; \forall (x,t)\in NL_{b,\sigma}\times [0,T).$$ 
		\item (Case 2)
		$$ \sup_{\calO^*}\left[u-v\right] = u(x_*,t_*)-v(x_*,t_*),\text{ for some $(x_*,t_*)\in NL_{b,\sigma}\times [0,T)$}.$$ 
	\end{enumerate}	
	
	\noindent {\bf CASE 1: } We assume the contrary to~\eqref{eq:comp} \emph{i.e.,} 
	\begin{align}\label{contC1}\sup_{\calO^*}\left[u-v\right]-\delta>\sup_{x\in \RR^n}\left\{\left[u(x,0)-v(x,0)\right]\vee 0\right\}, \end{align}
	for some $\delta>0$.
	
	We define $u^\rho$ as $u-\frac{\rho}{T-t}$.
	Now consider the following auxillary function: For $\alpha,\rho,\beta>0$, define $M_{\alpha,\rho,\beta}:\RR^n\times \RR^n\times [0,T)\rightarrow \RR$ by
	\begin{align}\label{def:M}
	M_{\alpha,\rho,\beta}(x,y,t)\doteq  u^\rho(x,t)-v(y,t)-\alpha \|x-y\|^2 -\beta \|x\|^2,
	\end{align}
	
	From the behavior of $-\beta\|x\|^2$, boundedness of $u,v$ and upper semicontinuity of $M_{\alpha, \beta, \rho}$, we know that there exists $(\hat x,\hat y,\hat t)\in{\RR^n}\times{\RR^n}\times [0,T) $ which is a maximizer of $M_{\alpha, \rho,\beta}$ on ${\RR^n}\times{\RR^n}\times [0,T)$. 	
	
	Using Lemma~\ref{auxifun} in the Appendix, we can ensure that, for small enough $\rho$ and $\beta$  
	$$ \sup_{{\RR^n}\times{\RR^n}\times [0,T)}M_{\alpha, \rho,\beta}>\eta>0,$$
	for some $\eta>0$.
	From the arguments of \cite[Pg. 916, Eq. 3.11]{ishii1994uniqueness}, we have
	\begin{align}\label{eq:alphalim}
	\lim_{\alpha\to\infty}	\varlimsup_{\beta,\rho\to 0} \alpha\|\hat x-\hat y\|^2 = 0
	\end{align}	
	
	From Equation~\eqref{a1}, we have \editcol{$(x_0,y_0,t_0)$ such that 
	\begin{align} M_{\alpha,\beta,\rho}(x_0,y_0,t_0)&\geq  \sup_{ \calO^*}\left[u(x,t)-v(x,t)\right]-5\epsilon, \text{ for small enough $\epsilon$}\\
	\implies \sup_{ \RR^n\times\RR^n\times [0,T]}\left[M_{\alpha,\beta,\rho}\right]&\geq \sup_{ \calO^*}\left[u(x,t)-v(x,t)\right]-5\epsilon.\end{align}}
	
	From Equation~\eqref{eq:alphalim}, 
	we have 
	$$ \lim_{\alpha\to\infty}	\varliminf_{\beta,\rho\to 0}\sup_{ \RR^n\times\RR^n\times [0,T)}\left[M_{\alpha,\beta,\rho}\right]=\sup_{ \calO^*}\left[u-v\right]> u(x,t)-v(x,t),\; \forall (x,t)\in NL_{b,\sigma}\times[0,T).$$
	\editcol{ The inequality above follows from the arbitrariness of $\epsilon$.}

	Therefore, we have shown that for  large enough $\alpha$ and small enough $\beta$ and $\rho$, there is a $r>0$ such that $\hat x, \hat y\notin B_r(NL_{b,\sigma})$. 

	Since from the assumption of the contrary of~\eqref{eq:comp} and choice of $\alpha, \beta, \rho$, $(\hat x,\hat y, \hat t)$ are interior points of $\calO^*$, using \cite[Theorem 8.3]{crandall1992user}, we know that there exist $a,b, X,Y$ such that 
	\begin{align*}
	(a,\alpha(\hat x-\hat y) , X)&\in \overline{\calP}^{2,+} (u^\rho(\hat x, \hat t) -\beta\|\hat x\|^2)\\
	(b,\alpha(\hat x-\hat y) , -Y)&\in \overline{\calP}^{2,-} v(\hat y, \hat t)
	\end{align*}
	such that $a-b=0$ and 
	\begin{align}\label{matineq}-3\alpha \begin{pmatrix}
	\I& 0\\
	0&\I
	\end{pmatrix}\leq \begin{pmatrix}
	X&0\\
	0&Y
	\end{pmatrix} \leq 3\alpha \begin{pmatrix}
	\I&-\I\\
	-\I & \I
	\end{pmatrix},\end{align}
where $\I$ is the identity matrix in $\RR^{n\times n}$. Using the fact that $u$ and $u$ are sub and supersolution respectively, we get
	$$ a+ \gamma\left(u^\rho(\hat x, \hat t) -\beta\|\hat x\|^2\right) -\calL(\hat x, \alpha(\hat x-\hat y)+2\beta \I, X+2\beta \I) \leq -\frac{\rho}{T^2}$$
	$$b+\gamma v(\hat y, \hat t)-\calL(\hat x, \alpha(\hat x-\hat y), Y)\geq 0.$$
	From above, we have
	\begin{align*}
	& \gamma\left(u(\hat x,\hat y)-v(\hat y,\hat t)\right) -\frac{\gamma \rho}{T-\hat t} -\gamma \beta \|\hat x\|^2-\beta\langle b(\hat x), \hat x\rangle -\beta Tr\left\{\sigma(\hat x)\sigma(\hat x)^\dagger \right\}\\
	&- \langle b(\hat x)-b(\hat y), \alpha (\hat x-\hat y)\rangle - Tr\left\{ \sigma(\hat x)\sigma(\hat x)^\dagger X-\sigma(\hat y)\sigma(\hat y)^\dagger Y\right\}\leq - \frac{\rho}{T^2},
	\end{align*}
	From~\eqref{eq:alphalim} and the fact that there is a $r>0$ such that  $\hat x,\hat y\notin B_r(NL_{b,\sigma})$ (for large enough $\alpha$ and small enough $\beta$ and $\rho$), we have a $K_r>0$ such that 
	\begin{align*}
\|b(\hat x)-b(\hat y)\|+	\|\sigma(\hat x)-\sigma (\hat y)\|\leq K_r\|\hat x-\hat y\|.
	\end{align*}
This gives us
	\begin{align*}
	&\gamma\left(u(\hat x,\hat y)-v(\hat y,\hat t)\right) -\frac{\gamma \rho}{T-\hat t} -\gamma \beta \|\hat x\|^2-\beta\langle b(\hat x), \hat x\rangle\\
	& -\beta Tr\left\{\sigma(\hat x)\sigma(\hat x)^\dagger \right\} +\frac{\rho}{T^2} \leq  3\alpha  \|\sigma(\hat x)-\sigma(\hat y)\|^2+ K_r \|\hat x-\hat y\|^2\\
	&\leq 3\alpha K_r \|\hat x-\hat y\|^2
	\end{align*}
	In the above, we used the fact that left multiplying~\eqref{matineq} with $$\Sigma\doteq \begin{pmatrix}
	\sigma(\hat x)\sigma(\hat x)^\dagger& \sigma(\hat y)\sigma(\hat x)^\dagger\\
	\sigma(\hat x)\sigma(\hat y)^\dagger&\sigma(\hat y)\sigma(\hat y)^\dagger
	\end{pmatrix}$$ and taking the trace gives us
	\begin{align*}
	Tr\left\{\sigma(\hat x)\sigma(\hat x)^\dagger X-\sigma(\hat y)\sigma(\hat y)^\dagger Y\right\}&\leq 3\alpha Tr\left\{ (\sigma(\hat x)-\sigma(\hat y))(\sigma(\hat x)-\sigma(\hat y))^\dagger \right\}\\
	&\leq 3\alpha \|\sigma(\hat x)-\sigma(\hat y)\|^2
	\end{align*}
	Finally, taking $\beta,\rho\downarrow 0$ and then $\alpha  \uparrow \infty$, we have a contradiction, since it leads to $$0<\gamma \left( \sup_{\calO^*}[u-v]\right)\leq 0.$$
	
	\noindent {\bf CASE 2: }
	
	We now have 
	\begin{align}\label{supNL}
		\sup_{\calO^*}\left[u-v\right] =  u(x_*,t_*)-v(x_*,t_*),
		\end{align}
		 for some $(x_*,t_*)\in NL_{b,\sigma}\times [0,T)$.
	 
	 We deal with this case in the following way: we select a very special subsolution  of~\eqref{bke} denoted by $\psi$ in order to understand the behavior of $u-v$ around $(x_*,t_*)$. This special subsolution $\psi$ is such that 
	 \begin{enumerate}
	 	\item \editcol{$u-\psi$ and $v+\psi$} are also subsolutions and supersolutions, respectively.
	 	\item $\psi(x,t)=h(\|x-x_*\|)$, for some modulus of continuity $h$.
	 \end{enumerate}  
 A few remarks are in order before we proceed. It is important to note that it is not apriori clear that $u+v$ is a subsolution whenever $u$ and $v$ are subsolutions. Hence, asking for \editcol{$u+\psi$ ($v-\psi$}, respectively) to be a subsolution (supersolution, respectively) is a non-trivial requirement. It will be clear from the proof that if we desired to relax the Assumption~\ref{assu:zero} to a much weaker assumption which says that $b$ and $\sigma $ are merely bounded uniformly continuous, then the current result will continue to hold as long as one can find a very special subsolution that satisfies the aforementioned properties. As constructing the subsolutions or supersolutions is much easier than constructing constructing viscosity solutions, this approach will hopefully helpful in studying cases where $b$ and $\sigma$ are merely bounded uniformly continuous. 
 
 We now continue with the proof. In the following lemma, we establish the existence of $\psi$. 
 \begin{lemma}\label{specialsol}For $K>0$ and $x_*\in NL_{b,\sigma}$,
 	$$ \psi=K \|x-x_*\|^\gamma$$ 
 	is a \editcol{supersolution} of~\eqref{bke} over a neighbourhood around $x_*$ whenever $\gamma>\max\{2(1-\beta),1-\alpha\}$. Additionally, \editcol{$u-\psi$ and $v+\psi$ are subsolution and supersolutions} of~\eqref{bke}, respectively.	
 \end{lemma}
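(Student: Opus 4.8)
The plan is to prove this in two stages. Stage~1: show that $\psi=K\|x-x_*\|^{\gamma}$ is itself a viscosity supersolution of~\eqref{bke} on a small ball $B_r(x_*)$. Stage~2: deduce that $u-\psi$ is a subsolution and $v+\psi$ a supersolution by a standard perturbation argument. Since $\psi$ is independent of $t$ and smooth away from $x_*$, Stage~1 splits into a pointwise differential inequality on $B_r(x_*)\setminus\{x_*\}$ and a separate check at $x_*$. Throughout I would fix $\gamma$ in the open interval $\big(\max\{2(1-\beta),1-\alpha\},1\big)$, which is nonempty because $\beta>\tfrac12$ gives $2(1-\beta)<1$ and because~\eqref{i3} gives $1-\alpha<2(1-\beta)$; the restriction $\gamma<1$ is essential below.

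For the inequality off $x_*$: writing $\rho=\|x-x_*\|$, $e=(x-x_*)/\rho$, one computes $D\psi=K\gamma\rho^{\gamma-1}e$ and $D^2\psi=K\gamma\rho^{\gamma-2}\big((\gamma-2)ee^{\dagger}+\I\big)$, whence
\[
\calL(D^2\psi,D\psi,x)=\tfrac12 K\gamma\rho^{\gamma-2}\Big[(\gamma-1)\,e^{\dagger}\sigma\sigma^{\dagger}e+\operatorname{Tr}\big((\I-ee^{\dagger})\sigma\sigma^{\dagger}\big)\Big]+K\gamma\rho^{\gamma-1}\,b\cdot e .
\]
I would then shrink $r$ so $B_r(x_*)$ lies in the neighbourhood of Assumption~\ref{assu:zero}\eqref{i2}; on $B_r(x_*)\cap L_{b,\sigma}$ that bound gives $e^{\dagger}\sigma\sigma^{\dagger}e\ge C_\sigma^{-1}\rho^{2\beta}$ and $\operatorname{Tr}((\I-ee^{\dagger})\sigma\sigma^{\dagger})\le(n-1)C_\sigma\rho^{2\beta}$, while~\eqref{i1} with $b(x_*)=0$ gives $\|b\|\le C_b\rho^{\alpha}$. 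Because $\gamma-1<0$, the bracket is $\le-2c_0\rho^{2\beta}$ for some $c_0>0$ (immediate for $n=1$; for $n\ge2$ this is exactly where the two--sided bound of~\eqref{i2} is needed, to let the radial term beat the tangential trace), so
\[
\calL(D^2\psi,D\psi,x)\le K\gamma\,\rho^{\gamma-2+2\beta}\big(-c_0+C_b\,\rho^{\,1+\alpha-2\beta}\big),
\]
which is $\le0$ after a further shrinking of $r$, since $1+\alpha-2\beta>0$ by~\eqref{i3}; and at points of $B_r(x_*)\cap NL_{b,\sigma}$, where $b=\sigma=0$ by~\eqref{i2}, the left side is $0$. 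So $\psi$ is a classical supersolution on $B_r(x_*)\setminus\{x_*\}$. (The hypothesis $\gamma>2(1-\beta)$ in fact forces $D^2\psi$ to degenerate at rate $\rho^{\gamma-2+2\beta}\to0$ near $x_*$, which is what is wanted later in the comparison argument rather than here.) At $x_*$ itself: by~\eqref{i2} $b(x_*)=\sigma(x_*)=0$, so $\calL(\cdot,\cdot,x_*)\equiv0$; and since $\gamma<1$ the $\rho^{\gamma}$--cusp dominates every quadratic while $\psi(x_*,\cdot)$ is constant, so $\calP^{2,-}_{\calO}\psi(x_*,t)=\{0\}\times\RR^n\times\calS^n$, on which the supersolution inequality is just $0\ge0$.

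For Stage~2 I would use the elementary fact that, $\calL$ being linear in its first two arguments, adding a smooth classical subsolution (resp.\ supersolution) to a viscosity subsolution (resp.\ supersolution) preserves the property --- if $(a,p,X)\in\calP^{2,+}_{\calO}(w+\varphi)(x,t)$ then $(a-\partial_t\varphi,\,p-D\varphi,\,X-D^2\varphi)\in\calP^{2,+}_{\calO}w(x,t)$, and one concludes from the two inequalities. Since by Stage~1 $-\psi$ is a smooth classical subsolution and $\psi$ a smooth classical supersolution on $B_r(x_*)\setminus\{x_*\}$, this makes $u-\psi$ a subsolution and $v+\psi$ a supersolution there; at $x_*$ one again invokes $\calL(\cdot,\cdot,x_*)\equiv0$ and the time--independence of $\psi$ at $x_*$ to check from the semijet definitions that the one--sided inequalities survive, and $\psi\ge0$ keeps the one--sided initial data in force.

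I expect the crux to be the exponent accounting in Stage~1: one must arrange that the negative radial second--order term, of order $\rho^{\gamma-2+2\beta}$, beats simultaneously the nonnegative tangential term --- which sits at the \emph{same} order, so the two--sided ellipticity of~\eqref{i2} is indispensable and is trivial only for $n=1$ --- and the drift term, which is of the strictly higher order $\rho^{\gamma-1+\alpha}$ precisely because~\eqref{i3} holds. A secondary subtlety is the isolated point $x_*$, where the whole argument leans on $b$ and $\sigma$ vanishing there.
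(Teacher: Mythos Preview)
Your overall route differs from the paper's. The paper never works directly with the singular $\psi$; instead it mollifies to $\psi_\theta=(\|x-x_*\|^2+\theta)^{\gamma/2}$, verifies that $\psi_\theta$ is a \emph{smooth classical} supersolution of a slightly perturbed equation on a neighbourhood of $x_*$, applies the ``smooth additive'' lemma (Lemma~\ref{sum}) to deduce that $v+\psi_\theta$ is a viscosity supersolution and $u-\psi_\theta$ a viscosity subsolution, and only then sends $\theta\downarrow 0$ using stability of viscosity solutions under locally uniform convergence. Your Stage~1 computation on $B_r(x_*)\setminus\{x_*\}$ is essentially the $\theta=0$ limit of their differential inequality and is fine; your separate verification that $\psi$ itself is a supersolution at $x_*$ via the subjet is also correct.

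The genuine gap is Stage~2 at the point $x_*$. Subtracting a cusp can strictly \emph{enlarge} the parabolic superjet there: for $\gamma<1$, every $(a,p,X)$ with $a$ an upper time--derivative of $s\mapsto u(x_*,s)$ and \emph{arbitrary} $(p,X)$ lies in $\calP^{2,+}_{\calO}(u-\psi)(x_*,t)$, so the subsolution inequality for $u-\psi$ (which, since $\calL(\cdot,\cdot,x_*)\equiv 0$, reduces to $a\le 0$) would force $t\mapsto u(x_*,t)$ to be nonincreasing in the viscosity sense. But you cannot read this off from the subsolution property of $u$: $\calP^{2,+}_{\calO}u(x_*,t)$ may well be empty --- for instance if $u$ has its own upward $x$--cusp at $x_*$ --- and then the viscosity inequality for $u$ carries no information at $(x_*,t)$. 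The sentence ``invoke $\calL(\cdot,\cdot,x_*)\equiv 0$ and the time--independence of $\psi$'' does not bridge this. The paper's regularisation is precisely what circumvents the problem: $\psi_\theta$ is $C^\infty$, so Lemma~\ref{sum} applies at every point including $x_*$, and the singularity is absorbed into the stability limit rather than handled by a pointwise semijet argument. (On the dimensional point you raise in your parenthetical: the competition between the negative radial term and the nonnegative tangential trace is at the \emph{same} order $\rho^{2\beta+\gamma-2}$, and after inserting the two--sided bound one needs roughly $(1-\gamma)C_\sigma^{-1}>(n-1)C_\sigma$; the paper's computation faces the identical tension, so this is not a defect of your route specifically.)
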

 \begin{proof} From the linearity of~\eqref{bke}, it is clear that without loss in generality we can take $K=1$.
 	We first prove that for $\theta>0$
 	$$ \psi_\theta(x,t)\doteq \{\|x-x_*\|^2+\theta \}^{\frac{\gamma}{2}} $$ is a \editcol{supersolution} of~\eqref{xieq} below. \editcol{Before we do that, note that as $\theta\downarrow 0$, $\psi_\theta \to \psi$ uniformly on the compact sets of $\RR^n$.} Since $\psi_\theta$ is twice differentiable in $x$ and smooth in $t$, we can conclude that $\psi_\theta$ is a \editcol{supersolution} of~\eqref{xieq} by showing that it is a classical supersolution \emph{i.e.,} for $A(x) \doteq\frac{1}{2} \sigma(x)\sigma(x)^T$ with $x$ in a closed ball,
 	\begin{align}\label{xieq}
 	\calL_\theta(x) \doteq \partial_t \psi_\theta- b(x).\nabla \psi_\theta - Tr\{ A(x)\Delta \psi_\theta\} +3\xi = 0, 
 	\end{align}
 	for small  $\xi>0$ and sufficiently small $\theta$ (depending on $\xi$).
 	Firstly, note that from Assumption~\ref{assu:zero}, $b(x)=0$ and $A(x)=0$, whenever $x\in NL_{b,\sigma}$.  Therefore, $\calL_\theta (x) = 0$  whenever $x\in NL_{b,\sigma}$. This means that we only have to show that $\calL_\theta(x)  \geq 0$ for $x\in L_{b,\sigma}$.
 	\begin{align*}
 	& \partial_t \psi_\theta- b(x).\nabla \psi_\theta - Tr\{ A(x)\Delta \psi_\theta\}\\
 	&=- \frac{\gamma }{\{\|x-x_*\|^2+\theta \}^{1-\frac{\gamma}{2}}}b(x).(x-x_*)\\
 	&\quad -\frac{\gamma Tr\{A(x)\}}{\{\|x-x_*\|^2+\theta \}^{1-\frac{\gamma}{2}}} + \frac{2\gamma(1-\frac{\gamma}{2}) (x-x_*)^\dagger A(x)(x-x_*)}{\{\|x-x_*\|^2+\theta \}^{2-\frac{\gamma}{2}}}\\
 	&\editcol{\geq} -\frac{\gamma \editcol{C_b} }{\{\|x-y\|^2+\theta \}^{1-\frac{\gamma}{2}}}\|x-x_*\|^\alpha\|x-x_*\|\\
 	&\quad-\frac{\gamma Tr\{A(x)\}}{\{\|x-x_*\|^2+\theta \}^{1-\frac{\gamma}{2}}} + \frac{2\gamma(1-\frac{\gamma}{2}) (x-x_*)^\dagger A(x)(x-x_*)}{\{\|x-x_*\|^2+\theta \}^{2-\frac{\gamma}{2}}}\\
 	&\text{ (from Assumption~\ref{assu:zero})}\\
 	&\editcol{\geq}- \frac{\gamma \editcol{C_b} }{\{\|x-y\|^2+\theta \}^{1-\frac{\gamma}{2}}}\|x-x_*\|^\alpha\|x-x_*\|\\
 	&\quad-\frac{\gamma Tr\{A(x)\}}{\{\|x-x_*\|^2+\theta \}^{1-\frac{\gamma}{2}}} + \frac{2\gamma(1-\frac{\gamma}{2})Tr\{A(x)\} \|x-x_*\|^2}{n\{\|x-x_*\|^2+\theta \}^{2-\frac{\gamma}{2}}},\\
 	&\text{ from the fact that $\frac{1}{n}Tr(A(x))\leq \sup_{\|v\|=1}\{v^\dagger A(x)v\}$},
 	\end{align*}
 	In the above, we used 
 	$$ \frac{\partial}{\partial x_j} \frac{\gamma (x-x_*)_i}{\{\|x-x_*\|^2+\theta \}^{1-\frac{\gamma}{2}}}= \frac{\gamma \delta_{ij}}{\{\|x-x_*\|^2+\theta \}^{1-\frac{\gamma}{2}}} - \frac{2\gamma(1-\frac{\gamma}{2}) (x-x_*)_i(x-x_*)_j}{\{\|x-x_*\|^2+\theta \}^{2-\frac{\gamma}{2}}}.$$
 	From Assumption~\ref{assu:zero}, it is clear that  $$ \editcol{C_\sigma ^{-1}\|x-x_*\|^{2\beta}\leq} Tr\{A(x)\}\leq T(x)\doteq \editcol{\frac{C_\sigma}{n}}\|x-x_*\|^{2\beta},$$
 	for some $\editcol{C_\sigma}>0$.
 	Thus we have
 	\begin{align}\nonumber
 	\calL_\theta(x) \ &\editcol{\geq} \ \frac{-\gamma\editcol{C_b} }{\{\|x-x_*\|^2+\theta \}^{1-\frac{\gamma}{2}}}\|x-x_*\|^{1+\alpha}\\\nonumber
 	&\quad\quad\quad-\frac{\gamma \editcol{C_\sigma}\|x-x_*\|^{2\beta} }{n\{\|x-x_*\|^2+\theta \}^{1-\frac{\gamma}{2}}}+ \frac{2\gamma(1-\frac{\gamma}{2})\|x-x_*\|^{2\beta} \|x-x_*\|^2}{nC_\sigma\{\|x-x_*\|^2+\theta \}^{2-\frac{\gamma}{2}}}\\\nonumber
 	&\editcol{\geq\frac{1}{\{\|x-x_*\|^2+\theta \}^{1-\frac{\gamma}{2}}}\Bigg[-\gamma\editcol{C_b}\|x-x_*\|^{\alpha+1}-\frac{\gamma C_\sigma}{n}\|x-x_*\|^{2\beta}}.\\\label{eq:estcomp}
 	&\editcol{\quad\quad\quad+ \frac{2\gamma(1-\frac{\gamma}{2})}{nC_\sigma\{\|x-x_*\|^2+\theta \}}\|x-x_*\|^{2\beta+2}\Bigg]}
 	\end{align}
 	\editcol{ It is clear that if $\gamma> \max\{2(1-\beta),1-\alpha\}$, then $\alpha+\gamma-1> 0$, $2\beta+\gamma-1> 0$ and $2\beta+\gamma-2> 0$. This means that as $\theta\to 0$,
 		\begin{align}\label{uc1} \frac{\|x-x_*\|^{\alpha+1}}{\{\|x-x_*\|^2+\theta \}^{1-\frac{\gamma}{2}}}&\to \|x-x_*\|^{\alpha+\gamma-1},\\\label{uc2}
 		 \frac{\|x-x_*\|^{2\beta}}{\{\|x-x_*\|^2+\theta \}^{1-\frac{\gamma}{2}}}&\to \|x-x_*\|^{2\beta+\gamma-1},\\\nonumber
 		 &\text{and}\\\label{uc3}
 		\frac{\|x-x_*\|^{2\beta+2}}{\{\|x-x_*\|^2+\theta \}^{2-\frac{\gamma}{2}}}&\to \|x-x_*\|^{2\beta+\gamma-2}\end{align}
 		uniformly on compact sets of $\RR^n$. Therefore for small $\xi>0$, small enough $\theta$ and on a large closed ball of $\RR^n$,
 		\begin{align}\label{eq:compterms}
 		\calL_\theta(x)\geq -\gamma\editcol{C_b}\|x-x_*\|^{\alpha+\gamma-1}-\frac{\gamma C_\sigma}{n}\|x-x_*\|^{2\beta+\gamma-1}+ \frac{2\gamma(1-\frac{\gamma}{2})}{nC_\sigma}\|x-x_*\|^{2\beta+\gamma-2} -3\xi.
 		\end{align}
 		
 		Since $1+\alpha-2\beta> 0$ from Assumption~\ref{assu:zero}, we can conclude that on a neighbourhood around $x_*$, the sum of the first three terms in the above equations is non-negative and this implies that $\psi_\theta$ is a supersolution of~\eqref{xieq}.
 		Since $v$ is supersolution of~\eqref{bke}, it is automatically a supersolution of~\eqref{xieq}. This together with the smoothness of $\psi_\theta$ concludes that $v+\psi_\theta$ is supersolution of~\eqref{xieq} (see Lemma~\ref{sum}).
 		
 		Noting that $v+\psi_\theta\to v+\psi$, uniformly on compact sets, we can conclude that $v+\psi$ is a supersolution of~\eqref{bke} (from \cite[Lemma II.6.2]{fleming2006controlled}). We can conclude that $u-\psi$ is a subsolution of~\eqref{bke} by considering $\xi<0$ in~\eqref{xieq} and arguing similarly as above. This proves the lemma.}
 \end{proof}
 
  Now that we have proved that a $\psi$ with the desired properties exists, let us assume the contrary to~\eqref{eq:comp} \emph{i.e.,}
 \begin{align}\label{negsupNL}  u(x_*,t_*)-v(x_*,t_*)-\delta> \sup_{x\in \RR^n}\left\{\left[u(x,0)-v(x,0)\right]\vee 0\right\}, 
 \end{align}
 for some $\delta>0$.  
 In the following lemma, we take $h:\RR^+\rightarrow \RR^+$ as $h(r)\doteq K r^\gamma$, where $\gamma$ is as in Lemma~\ref{specialsol} and $K>0$.
\begin{lemma}
	For some $r>0$,
	$$ u(x,t)-v(y,t) -h(\|x-x_*\|)- h(\|y-x_*\|)\leq 0,$$
	whenever $\|x-x_*\|<r$, $\|y-x_*\|<r$ and $t\in[0,T)$.
\end{lemma}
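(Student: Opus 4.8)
The plan is to re-read the asserted inequality as a comparison between $\hat u\doteq u-\psi$ and $\hat v\doteq v+\psi$, where $\psi(x)=K\|x-x_*\|^{\gamma}=h(\|x-x_*\|)$ is the function produced by Lemma~\ref{specialsol}. By that lemma there is $r_0>0$ so that $\hat u$ is a subsolution and $\hat v$ a supersolution of the governing equation on $B_{r_0}(x_*)\times(0,T)$, and since $\psi(x_*)=0$ the claim is exactly that $\hat u(x,t)\le\hat v(y,t)$ for $x,y$ in some ball $B_r(x_*)$ and all $t\in[0,T)$. To dispense with a lateral boundary I would first replace $\psi$ by the global function $\tilde\psi\doteq\min\{K\|x-x_*\|^{\gamma},M_0\}$ with $0<M_0<Kr_0^{\gamma}$; then $\tilde\psi\equiv\psi$ on a ball $B_{r_*}(x_*)$, $\tilde\psi$ is constant outside $B_{r_0}(x_*)$, and --- since a maximum of subsolutions is a subsolution and a minimum of supersolutions is a supersolution, and $\psi,M_0$ are both supersolutions on $B_{r_0}(x_*)$ --- $\hat u=u-\tilde\psi$ and $\hat v=v+\tilde\psi$ become a subsolution and a supersolution on all of $\calO$. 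The decisive structural point is that, because in Case~2 $u-v$ attains its global maximum over $\calO^*$ at $(x_*,t_*)$ and $\tilde\psi\ge0$ with $\tilde\psi>0$ away from $x_*$, one has $\sup_{\calO^*}[\hat u-\hat v]=u(x_*,t_*)-v(x_*,t_*)$ and \emph{every} maximizing sequence of $\hat u-\hat v$ converges to $x_*$.

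Next I would run the usual doubling of variables, exactly as in Case~1: for $\alpha,\mu,\rho>0$ maximize
\[ N_{\alpha,\mu,\rho}(x,y,t)\doteq\hat u(x,t)-\hat v(y,t)-\alpha\|x-y\|^2-\mu\|x\|^2-\tfrac{\rho}{T-t} \]
over $\RR^n\times\RR^n\times[0,T)$, at a point $(\hat x,\hat y,\hat t)$. Using $\tilde\psi(x_*)=0$ and the standing hypothesis~\eqref{negsupNL}, $N_{\alpha,\mu,\rho}(\hat x,\hat y,\hat t)\ge N_{\alpha,\mu,\rho}(x_*,x_*,t_*)\ge\delta/2$ once $\mu,\rho$ are small, so in particular $\hat u(\hat x,\hat t)-\hat v(\hat y,\hat t)\ge\delta/2$. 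The estimate $\alpha\|\hat x-\hat y\|^2\to0$ (the analogue of~\eqref{eq:alphalim}) holds, and --- by the structural point above --- $\hat x,\hat y\to x_*$ in the iterated limit $\alpha\to\infty$, $\mu,\rho\to0$.

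Now invoke \cite[Theorem 8.3]{crandall1992user} at $(\hat x,\hat y,\hat t)$ and feed the outputs into the subsolution inequality for $\hat u$ and the supersolution inequality for $\hat v$; subtracting and estimating the second-order term by $3\alpha\|\sigma(\hat x)-\sigma(\hat y)\|^2$ through the matrix inequality~\eqref{matineq}, exactly as in Case~1, gives
\[ \gamma\bigl(\hat u(\hat x,\hat t)-\hat v(\hat y,\hat t)\bigr)+\tfrac{\rho}{T^2}\le\tfrac32\alpha\|\sigma(\hat x)-\sigma(\hat y)\|^2+2\alpha\bigl(b(\hat x)-b(\hat y)\bigr)\cdot(\hat x-\hat y)+o_{\mu,\rho}(1), \]
whose left side is $\ge\gamma\delta/2>0$. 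When $\hat x=\hat y=x_*$ this already closes the argument: then $b(x_*)=\sigma(x_*)=0$ gives $\calL(\cdot,\cdot,x_*)\equiv0$, the sub/supersolution inequalities collapse to $a+\gamma\hat u(x_*,\hat t)\le-\rho/T^2$ and $a+\gamma\hat v(x_*,\hat t)\ge0$, subtraction gives $\gamma(\hat u(x_*,\hat t)-\hat v(x_*,\hat t))\le-\rho/T^2<0$, and $\hat u(x_*,\hat t)-\hat v(x_*,\hat t)=u(x_*,\hat t)-v(x_*,\hat t)\ge\delta/2>0$ --- a contradiction.

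The main obstacle is the general case $\hat x_\alpha,\hat y_\alpha\to x_*$ with $\hat x_\alpha\ne\hat y_\alpha$, where the local Lipschitz bounds for $b,\sigma$ are unavailable: one must show the right-hand side above still tends to $0$, i.e.\ that $\alpha\|\sigma(\hat x_\alpha)\|^2$ and $\alpha\|b(\hat x_\alpha)\|\,\|\hat x_\alpha-\hat y_\alpha\|$ (and the analogous $\hat y$--terms) vanish. The H\"older bounds reduce the first to $\alpha\|\hat x_\alpha-x_*\|^{2\beta}\to0$, and the way I would obtain this is to extract a convergence rate $\|\hat x_\alpha-x_*\|\lesssim\alpha^{-1/(2-\gamma)}$ from the inequalities that characterise $(\hat x_\alpha,\hat y_\alpha)$ as a maximizer (this is where the $\psi$--spike does its work, using $\gamma<2$ and the continuity of $u(x_*,\cdot),v(x_*,\cdot)$), and then to check that the exponent condition on $\gamma$ from Lemma~\ref{specialsol} (equivalently $1+\alpha-2\beta>0$, $\beta>\tfrac12$ of Assumption~\ref{assu:zero}\ref{i3}) together with the two-sided ellipticity bound of Assumption~\ref{assu:zero}\ref{i2} makes $\alpha^{1-2\beta/(2-\gamma)}\to0$ and the $b$--term negligible. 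This estimate, which is the discrete counterpart of the balance of exponents carried out in the proof of Lemma~\ref{specialsol}, is the technical heart of the argument; once it is in place, $\gamma\delta/2\le o(1)$ yields the contradiction, proving $\hat u(x,t)\le\hat v(y,t)$ on $B_r(x_*)\times B_r(x_*)\times[0,T)$, which is the assertion.
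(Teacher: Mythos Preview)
Your approach and the paper's diverge at one essential point: you retain the standard doubling term $\alpha\|x-y\|^2$ and then try to show that the resulting error terms (of the type $\alpha\|\sigma(\hat x_\alpha)-\sigma(\hat y_\alpha)\|^2$) vanish as $\alpha\to\infty$, for which you need the rate $\|\hat x_\alpha-x_*\|\lesssim\alpha^{-1/(2-\gamma)}$. You flag this correctly as the technical heart, but you do not prove it, and with only upper/lower semicontinuity of $u,v$ available I do not see how to extract such a quantitative rate from the maximizer inequalities. The paper avoids this difficulty altogether.

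The paper's device is to \emph{drop the $\alpha$--doubling entirely}. Since the radius $r>0$ furnished by Lemma~\ref{specialsol} does not depend on $K$, the paper first fixes $r$ and \emph{then} chooses
\[
K\;=\;\frac{\sup_{0\le t<T}\bigl(\|u(\cdot,t)\|_\infty+\|v(\cdot,t)\|_\infty\bigr)}{r^{\gamma}},
\]
so that $h(r)=Kr^{\gamma}$ already dominates $|u|+|v|$. Consequently the auxiliary function
\[
\Phi(x,y,t)=u(x,t)-v(y,t)-h(\|x-x_*\|)-h(\|y-x_*\|)-\beta(\|x\|^2+\|y\|^2)-\frac{\rho}{T-t}
\]
is $\le 0$ on the lateral boundary $\{\|x-x_*\|=r\}\cup\{\|y-x_*\|=r\}$, while the standing hypothesis~\eqref{negsupNL} makes $\sup_\Delta\Phi>0$ for small $\beta,\rho$. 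Hence the maximum $(\bar x_*,\bar y_*,\bar t_*)$ is interior to $B_r(x_*)\times B_r(x_*)\times(0,T)$, and \cite[Theorem 8.3]{crandall1992user} can be applied with the smooth penalty $\beta(\|x\|^2+\|y\|^2)$ \emph{only}. The matrix inequality then reads
\[
-\Bigl(\tfrac{1}{\epsilon}+\beta\Bigr)I\;\le\;\begin{pmatrix}X&0\\0&Y\end{pmatrix}\;\le\;(\beta+\epsilon\beta^2)I,\qquad \epsilon=\beta^{-1},
\]
so the second--order error is $O(\beta)\|\sigma\|_\infty^2$, not $O(\alpha)$. Plugging into the sub/supersolution inequalities for $u-h(\|\cdot-x_*\|)$ and $v+h(\|\cdot-x_*\|)$ (which is exactly what Lemma~\ref{specialsol} provides), subtracting, and sending $\beta,\rho\to0$ gives $\gamma\cdot(\text{a quantity}>\eta)\le 0$, the desired contradiction.

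In short: your globalisation $\tilde\psi=\min\{\psi,M_0\}$ is fine (and indeed $u-\tilde\psi=\max\{u-\psi,u-M_0\}$, $v+\tilde\psi=\min\{v+\psi,v+M_0\}$ makes the sub/supersolution property global), but the subsequent $\alpha$--doubling is unnecessary and \emph{creates} the obstacle you then cannot close. The paper's idea --- choose $K$ large so that the $h$--spike itself traps the maximum in a fixed ball --- removes the need for any coupling in $(x,y)$; the only small parameters left are $\beta,\rho$, and no rate estimate is required.
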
	
\begin{proof}
	From Lemma~\ref{specialsol}, we know that there exists $r>0$ such that $h(\|x-x_*\|)$ is a supersolution of~\eqref{bke} and $r$ can be chosen uniformly with respect to $K$. From now on, we fix such an	 $r>0$.  Choose $$K\doteq \frac{\sup_{0\leq t<T}\left\{\|u(\cdot,t)\|_\infty+\|v(\cdot,t)\|_\infty\right\}}{r^\gamma}.$$ 
	
Define 
	$$\Phi(x,y,t)\doteq u(x,t)-v(y,t)\editcol{-h(\|x-x_*\|)- h(\|y-x_*\|)} -\beta \left( \|x\|^2+\|y\|^2\right) - \frac{\rho}{T-t} $$ 
	and $$ \Delta\doteq \left\{ (x,y,t)\in \RR^n\times \RR^n\times[0,T):t>0, \|x-x_*\|< r \text{ and }\|y-x_*\|< r \right\}.$$
	
	From~\eqref{supNL}, \eqref{negsupNL} it is clear that there is $(\hat x,\hat y,\hat t)\in \Delta$ such that 
	\begin{align}\label{contrary} u(\hat x,\hat t)-v(\hat y,\hat t)\editcol{-h(\|\hat x-x_*\|)- h(\|\hat y-x_*\|)}>\eta>0.\end{align}
	
	Note that from the definition of $\Phi$, we know that there is a maximum $(\bar x_*,\bar y_*,\bar t_*)$ (depending on $\beta$ and $\rho$) of $\Phi$ on $\Delta$. For small enough $\beta$ and $\rho$ (Lemma~\ref{auxifun}), we can ensure
	$$ \Phi(\bar x_*, \bar y_*, \bar t_*)>0.$$
	From boundedness of $u$ and $v$ and sub-linearity of $h$, it is clear that 
	$$\beta(\|\bar x_*\|^2 +\|\bar y_*\|^2)<\infty \text{ and } \lim_{\beta\to 0}\beta(\|\bar x_*\|+\|\bar y_*\|)\to 0. $$
		It is clear that $0<\bar t_*<T$, for small enough $\beta$ and $\rho$.  To see that $\|\bar x_*-x_*\|<r$ and $\|\bar y_* -x_*\|<r$, for small enough $\beta$ and $\rho$, observe that from the choice of $K$, we have
	$$ h(\|x-x_*\|)+h(\|y-x_*\|)\geq \sup_{0\leq t<T}\left\{\|u(\cdot,t)\|_\infty+\|v(\cdot,t)\|_\infty\right\},$$
	whenever $\|x-x_*\|=r$ or $\|y-x_*\|=r$.
\editcol{	From this, it follows that 
	$$ \Phi(\bar x_*,\bar y_*,\bar t_*)\leq -\beta (\|\bar x_*\|^2+\|\bar y_*\|^2)-\frac{\rho}{T-\bar t_*}$$
	 This is contradictory to the assumption as $(\bar x_*,\bar y_*,\bar t_*)$ is the maximum of $\Phi$ and 
	 $$ \Phi(\bar x_*, \bar y_*, \bar t_*)>0.$$} 
	Therefore, for small enough $\beta$ and $\rho$, $(\bar x_*,\bar y_*,\bar t_*)\in \Delta$.
	
	Now using \cite[Theorem 8.3]{crandall1992user}, we know that there exist $a,b, X,Y$ such that 
	\begin{align*}
	(a,\beta\bar x_* , X)&\in \overline{\calP}^{2,+} (u^\rho(\bar x_*, \bar t_*) \editcol{-h(\|\bar x_*- x_*\|)})\\
	(b,\beta \bar y_* , -Y)&\in \overline{\calP}^{2,-}( v(\bar  y_*, \bar t_*) \editcol{+ h(\|\bar y_*-x_*\|)})
	\end{align*}
	such that $a=b$ and for every $\epsilon=\beta^{-1}$,
	$$-(\frac{1}{\epsilon}+ \beta ) \begin{pmatrix}
	\I& 0\\
	0&\I
	\end{pmatrix}\leq \begin{pmatrix}
	X&0\\
	0&Y
	\end{pmatrix} \leq (\beta+\epsilon \beta^2) \begin{pmatrix}
	\I&0\\
	0 & \I
	\end{pmatrix},$$
	where, $\I$ is the identity matrix in $\RR^{n\times n}$. Using the fact that $u-h(\|\cdot-x_*\|)$ and $v+h(\|\cdot-x_*\|)$ are sub and supersolution, respectively, we get
	$$ a+ \gamma\left(u^\rho(\bar x_*, \bar t_*) \editcol{-h(\|\bar x_*- x_*\|)}\right) -\calL(\bar  x_*, 2\beta \bar x_*, X) \leq -\frac{\rho}{T^2}$$
	$$b+\gamma \left(v(\bar  y_*, \bar t_*)\editcol{+ h(\|\bar y_*-x_*\|)}\right)-\calL(\bar y_*, 2\beta \bar y_*, Y)\geq 0.$$
	From above, we have
	\begin{align*}
	& \gamma\left(u(\bar x_*,\bar t_*)-v(\bar y_*,\bar t_*)\editcol{-h(\|\bar x_*-x_*\|)- h(\|\bar y_*-x_*\|)}\right) -\beta\langle b(\bar x_*), \bar x_*\rangle +\beta\langle b(\bar y_*), \bar y_*\rangle\\
	& - Tr\left\{ \sigma(\bar x_*)\sigma(\bar x_*)^\dagger X-\sigma(\bar y_*)\sigma(\bar y_*)^\dagger Y\right\}\leq - \frac{\rho}{T^2},
	\end{align*}
	Taking $\beta$ and $\rho$ to $0$, we have a contradiction, from the boundedness of $b$ and $\sigma$.
\end{proof}

From the lemma, we know that
$$ u(x,t)-v(y,t) \editcol{-h(\|x-x_*\|)- h(\|y-x_*\|)}\leq 0,$$
whenever $\|x-x_*\|<r$, $\|y-x_*\|<r$ and $t\in [0,T)$. In particular, 
$$ u(x_*,t_*)-v(y_*,t_*)\leq 0.$$ This contradicts~\eqref{negsupNL}. Hence, we have

$$\sup_{\calO^*}\left[u-v\right]=\sup_{x\in \RR^n}\left\{\left[u(x,0)-v(x,0)\right]\vee 0\right\}. $$  

This concludes the proof of Lemma \ref{comparison}.
	 
\end{proof}

\editcol{ To summarize, until now we have proved that $u^*$ ($=u_*$)  is a continuous viscosity solution whenever $f\in \calC^u_b(\RR^n)$. In the following proposition, we extend this result to allow for $f$ to lie in $\calC_b(\RR^n)$. 
\begin{proposition}\label{propdensity}
 Suppose $u^f$ stands for $u^*$ when $u^*(\cdot,0)=f\in \calC_b(\RR^n)$. Then $u^f$ is the continuous viscosity solution of~\eqref{bke}.
\end{proposition}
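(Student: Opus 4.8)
The idea is to reduce the general bounded continuous datum $f$ to the uniformly continuous case already treated. First I would approximate $f$ by $f_k\in\calC^u_b(\RR^n)$ with $\|f_k\|_\infty\le\|f\|_\infty$ and $f_k\to f$ uniformly on compact sets (such $f_k$ exist, e.g.\ by mollification: $f*\phi_{1/k}$ is Lipschitz, hence uniformly continuous, bounded by $\|f\|_\infty$, and converges locally uniformly to $f$). Writing $u^{\veps,g}(x,t)=\EE[g(X^\veps_T)\mid X^\veps_{T-t}=x]$ and letting $u^{*,g},u_*^g$ denote the corresponding upper/lower half-relaxed limits on $\calO^*$, everything established so far (Proposition~\ref{prop:subsol} together with the comparison principle Lemma~\ref{comparison}, both applicable because $f_k\in\calC^u_b$) shows that $u^{*,f_k}=u_*^{f_k}=:\bar u_k$ is a continuous viscosity solution of~\eqref{bke} with $\bar u_k(\cdot,0)=f_k$. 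The goal is then to pass to the limit in $k$.

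The core step is a uniform-in-$\veps$ bound on $u^{\veps,f}-u^{\veps,f_k}$. By the Markov property and time-homogeneity of~\eqref{eq:sdep}, the conditional law of $X^\veps_T$ given $X^\veps_{T-t}=x$ equals the time-$t$ law $P^\veps_t(x,\cdot)$ of the solution of~\eqref{eq:sdep} started at $x$. A second-moment computation as in the proof of the tightness theorem (using the uniform coefficient bound $K$) gives $\EE_x\|X^\veps_t-x\|^2\le C$ for all $\veps>0$, $x\in\RR^n$, $t\in[0,T]$, with $C$ depending only on $T$ and $K$; by Chebyshev, $\sup_{\veps>0}\sup_{\|x\|\le M}\sup_{t\le T}P^\veps_t(x,\{\|y\|>M+R\})\le C/R^2$ for every $M,R>0$. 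Hence, for $\|x\|\le M$ and $t\in[0,T]$,
\[
|u^{\veps,f}(x,t)-u^{\veps,f_k}(x,t)|\le\int|f-f_k|\,dP^\veps_t(x,\cdot)\le\sup_{\|y\|\le M+R}|f(y)-f_k(y)|+2\|f\|_\infty\,\frac{C}{R^2},
\]
uniformly in $\veps>0$.

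Finally, fixing a compact $Q\subset\calO^*$ and setting $M\doteq 1+\sup\{\|x\|:(x,t)\in Q\}$, I would feed the displayed bound into the definitions of $u^{*,\cdot}$ and $u_*^{\cdot}$ to get that $\sup_Q|u^{*,f_k}-u^{*,f}|$ and $\sup_Q|u_*^{f_k}-u_*^f|$ are both bounded by $\sup_{\|y\|\le M+R}|f-f_k|+2\|f\|_\infty C/R^2$; letting $k\to\infty$ and then $R\to\infty$ shows $\bar u_k=u^{*,f_k}\to u^{*,f}$ and $\bar u_k=u_*^{f_k}\to u_*^f$ uniformly on $Q$. Therefore $u^{*,f}=u_*^f=:u^f$. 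In particular $u^f$ is continuous, being a locally uniform limit of the continuous functions $\bar u_k$, its initial value is $\lim_k f_k=f$ (the convergence being locally uniform up to $t=0$), and by stability of viscosity sub/supersolutions under locally uniform convergence (\cite[Lemma II.6.2]{fleming2006controlled}), applied with the operator in~\eqref{bke} held fixed in $k$, $u^f$ is a viscosity solution of~\eqref{bke}; this is the assertion. I expect the main obstacle to be exactly the uniform-in-$\veps$ estimate of the second paragraph: it rests on the uniform tightness of the transition kernels $P^\veps_t(x,\cdot)$ (hence on a coefficient bound uniform in $\veps$) and on correctly identifying, via the Markov property and time-homogeneity, the conditional law entering $u^{\veps,f}$; once that is in hand, the rest is routine bookkeeping with half-relaxed limits and the stability lemma.
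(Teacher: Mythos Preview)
Your argument is correct and follows the same overall strategy as the paper---approximate $f\in\calC_b(\RR^n)$ by $f_k\in\calC^u_b(\RR^n)$ and pass to the limit---but the execution differs in a useful way. The paper first proves linearity $u^{f_1+f_2}=u^{f_1}+u^{f_2}$ for $f_1,f_2\in\calC^u_b$, writes $P_tf=u^f(\cdot,t)$, records $\|P_tf\|_\infty\le\|f\|_\infty$, and then simply \emph{declares} that $P_t$ extends continuously from $\calC^u_b$ to $\calC_b$ by density, defining $P_tf=\lim_n P_tf_n$. It does not explain why this limit exists, why it coincides with the half-relaxed limit $u^{*,f}$, or why the extension is still a viscosity solution. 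Your approach bypasses linearity entirely and instead supplies exactly the missing analytic ingredient: the uniform-in-$\veps$ tightness of the transition kernels $P^\veps_t(x,\cdot)$ (via the second-moment bound), which yields a quantitative estimate on $u^{\veps,f}-u^{\veps,f_k}$ that survives passage to the half-relaxed limits. This directly gives $u^{*,f}=u_*^f$, its continuity, the correct initial datum, and the viscosity-solution property via stability. In short, the paper's proof is terser and leans on an abstract extension that is not fully justified there; your proof is the concrete version that actually closes the gap, and the tightness estimate you isolate is in fact the same one the paper invokes later when proving convergence of finite-dimensional distributions. One caveat: your second-moment bound uses a coefficient bound $K$ uniform in $\veps$, which Assumption~\ref{assu:pert} does not state explicitly; but the paper's own tightness proof makes the same tacit assumption, so you are on equal footing.
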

\begin{proof}
	First, we show that for $f_1,f_2\in \calC^u_b(\RR^n)$, 
	\begin{align}\label{linearityzero}\text{ (linearity) } u^{f_1+f_2}=u^{f_1}+u^{f_2}.\end{align}
	From Lemma~\ref{comparison} and~\ref{convunifcomp}, we know that for $0\leq t<T$ as $\veps \to 0$,
	\begin{align}\label{lim3} \|u^{\veps,f_1}(\cdot,t)-u^{f_1}(\cdot,t)\|_\infty,\; \|u^{\veps,f_2}(\cdot,t)-u^{f_2}(\cdot,t)\|_\infty, \|u^{\veps,f_1+f_2}(\cdot,t)-u^{f_1+f_2}(\cdot,t)\|_\infty\to 0.\end{align}
	Here, $u^{\veps,g}$ denotes the continuous viscosity solution of~\eqref{bkep} whenever $u^{\veps,g}(\cdot,0)=g$. It is clear from the proof of Corollary~\ref{existencepert} that for any $\veps>0$, $u^{\veps,g}$ is linear in $g$, for $g\in \calC_b(\RR^n)$. Together with~\eqref{lim3}, the aforementioned linearity of $u^{\veps,g}$ gives us~\eqref{linearityzero}. 
	
To keep the notation standard, let us write $u^f(x,t)$ as $(P_t f)(x)$. From~\eqref{lowersemienv}, we have 
$$ \|P_tf\|_\infty\leq \|f\|_\infty, \text{ for $0\leq t<T$}.$$
To prove the statement of the proposition, we note that $\calC^u_b(\RR^n)$ is dense in $\calC_b(\RR^n)$ under the topology of uniform convergence on compact sets of $\RR^n$. Hence, $P_t$ can be extended continuously to $\calC_b(\RR^n)$ \emph{viz.,} for $f\in \calC_b(\RR^n)$ and $0\leq t<T$
$$ P_tf\doteq \lim_{n\to \infty} P_tf_n,$$
whenever $\{f_n\}_{n\geq 1}\subset \calC^u_b(\RR^n)$ such that $f_n \to f$ as $n\to \infty$ uniformly on compact sets of $\RR^n$. In fact, the above limit is uniform over any compact set of $[0,T)$.	  
\end{proof}}

From Theorem~\ref{th:FtoVS}, we know that every Feller process gives a corresponding continuous viscosity solution. The following result from \cite{borkar2010new} gives the converse of this statement \emph{viz.,} a continuous viscosity solution of~\eqref{bke} gives a corresponding Feller process.

\begin{theorem}{\cite[Theorem 3.2]{borkar2010new}}\label{th:limit}
Suppose Assumption~\ref{assu:pert} and the statement of Lemma~\ref{comparison} hold. Then all the limit points (in the sense of weak convergence) are solutions to the martingale problem corresponding to~\eqref{eq:sde} and one of the limit points is Feller. Also, if $\bar X$ and $\tilde X$ are two limit points, then for $0\leq s\leq t\leq T$, $(\bar X_s,\bar X_t)$ and $(\tilde X_s, \tilde X_t)$ has the same laws.	
\end{theorem}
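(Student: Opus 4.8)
The plan is to combine the tightness already established for $\{X^\veps\}_{\veps>0}$ with the locally uniform convergence $u^\veps\to\bar u$ furnished by the comparison principle, and then to pass to the weak limit in both the martingale identity and the Markov property enjoyed by the (Feller) prelimit diffusions $X^\veps$. First I would record the consequences of the hypotheses. Since the statement of Lemma~\ref{comparison} is assumed, its second remark gives $u^*=u_*$; I write $\bar u$ for this common value and, more generally, $u^g$ for it when the initial datum is $g\in\calC_b(\RR^n)$. By Lemma~\ref{convunifcomp} (and Proposition~\ref{propdensity} for general $g\in\calC_b(\RR^n)$), $u^{\veps,g}\to u^g$ uniformly on compact subsets of $\calO^*$; setting $(P_tg)(x):=u^g(x,t)$ we have, by Proposition~\ref{propdensity}, $P_tg\in\calC_b(\RR^n)$ with $\|P_tg\|_\infty\le\|g\|_\infty$, and, from the uniform-in-$\veps$ time-Lipschitz estimate, $t\mapsto P_tg$ continuous. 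I would also record that, by the Markov property of $X^\veps$ together with Corollary~\ref{existencepert}, $\EE[g(X^\veps_t)\mid\calF^\veps_s]=u^{\veps,g}(X^\veps_s,t-s)$, where $\calF^\veps$ is the natural filtration of $X^\veps$. Finally I fix a subsequence $\veps_k\downarrow0$ along which $X^{\veps_k}\Rightarrow\bar X$ in $\calC$; since $X^\veps_0\sim\mu$ for all $\veps$, also $\bar X_0\sim\mu$.

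\textbf{Step 1 (every limit point solves the martingale problem).} Writing $\calL^\veps g(x):=\calL^\veps(D^2g(x),Dg(x),x)$ and similarly $\calL g(x)$, for $g\in C^2_c(\RR^n)$ the process $g(X^\veps_t)-g(X^\veps_0)-\int_0^t\calL^\veps g(X^\veps_u)\,du$ is a martingale, so for $0\le s_1\le\cdots\le s_m\le s\le t\le T$ and $\phi\in\calC_b((\RR^n)^m)$,
$$\EE\Big[\Big(g(X^\veps_t)-g(X^\veps_s)-\int_s^t\calL^\veps g(X^\veps_u)\,du\Big)\phi(X^\veps_{s_1},\dots,X^\veps_{s_m})\Big]=0.$$
I would then pass to the limit along $\veps_k$: the path functional $X\mapsto\big(g(X_t)-g(X_s)-\int_s^t\calL g(X_u)\,du\big)\phi(X_{s_1},\dots,X_{s_m})$ is bounded and continuous on $\calC$, and $\sup_x|\calL^\veps g(x)-\calL g(x)|\to0$ by Assumption~\ref{assu:pert} since $g$ is compactly supported, so the identity holds in the limit with $\calL^\veps$ replaced by $\calL$. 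As such test-function identities characterize the $\calF^{\bar X}$-martingale property, $\bar X$ solves the martingale problem for~\eqref{eq:sde}; the argument uses nothing specific to the subsequence, so it applies to every limit point.

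\textbf{Step 2 (Feller property and matching finite-dimensional distributions).} For $g\in\calC_b(\RR^n)$ and $s_i,s,t$ as above, the Markov identity for $X^\veps$ reads
$$\EE\big[g(X^\veps_t)\,\phi(X^\veps_{s_1},\dots,X^\veps_{s_m})\big]=\EE\big[u^{\veps,g}(X^\veps_s,t-s)\,\phi(X^\veps_{s_1},\dots,X^\veps_{s_m})\big].$$
I would pass to the limit along $\veps_k$: the left side tends to $\EE[g(\bar X_t)\phi(\bar X_{s_1},\dots,\bar X_{s_m})]$, and on the right I split $u^{\veps_k,g}(X^{\veps_k}_s,t-s)=(P_{t-s}g)(X^{\veps_k}_s)+\big(u^{\veps_k,g}-P_\cdot g\big)(X^{\veps_k}_s,t-s)$, the first term converging by weak convergence (as $P_{t-s}g\in\calC_b(\RR^n)$) and the second tending to $0$ since $u^{\veps_k,g}\to P_\cdot g$ uniformly on compacts while $\{X^\veps_s\}_\veps$ is tight. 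This yields $\EE[g(\bar X_t)\phi(\bar X_{s_1},\dots,\bar X_{s_m})]=\EE[(P_{t-s}g)(\bar X_s)\phi(\bar X_{s_1},\dots,\bar X_{s_m})]$, so $\bar X$ is Markov with transition semigroup $(P_t)$; since $P_t$ maps $\calC_b(\RR^n)$ into itself and $t\mapsto P_tg$ is continuous, $\bar X$ is a Feller process. Because $(P_t)$ does not depend on the chosen subsequence and every limit point starts from $\mu$, any two limit points $\bar X,\tilde X$ are Markov with the same semigroup and the same initial law, hence share all finite-dimensional distributions; in particular $(\bar X_s,\bar X_t)$ and $(\tilde X_s,\tilde X_t)$ have the same law for $0\le s\le t\le T$.

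I expect the main obstacle to be the limit passage in Step~2: weak convergence of $X^{\veps_k}$ alone does not license moving the limit inside $u^{\veps_k,g}(X^{\veps_k}_s,\,\cdot\,)$, since $u^{\veps_k,g}\to P_\cdot g$ only locally uniformly, so one must supply a uniform-in-$\veps$ tightness estimate for $X^\veps_s$ — available because $b^\veps,\sigma^\veps$ are eventually uniformly bounded — in order to neutralize the contribution of the event $\{\|X^\veps_s\|\text{ large}\}$; the analogous point underlies the extension from $\calC^u_b(\RR^n)$ to $\calC_b(\RR^n)$ invoked in the first paragraph. The Chapman--Kolmogorov relations for $(P_t)$ need not be checked in advance, as they follow a posteriori from the Markov property obtained in Step~2.
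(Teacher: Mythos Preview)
The paper does not prove this theorem; it simply cites \cite[Theorem~3.2]{borkar2010new}. Your proposal is a correct, self-contained argument and in fact establishes more than the stated conclusion: your Step~2 shows that \emph{every} subsequential weak limit $\bar X$ is Markov with the \emph{same} transition operators $(P_t)$, and since $P_t$ maps $\calC_b(\RR^n)$ into itself, every limit point is Feller, not merely one of them. Combined with the common initial law $\mu$, this forces all limit points to share all finite-dimensional distributions, hence (since f.d.d.'s determine measures on $\calC$) there is in fact a unique weak limit. This subsumes the paper's subsequent theorem, whose proof takes the more laborious route of iterating the transition kernels $h_1^\veps,h_2^\veps,\dots$ and establishing at each stage that the composed operators converge uniformly on compacts, which in turn requires proving relative compactness of the family $\{P^\veps(t,\cdot,x):\veps>0,\,x\in\calJ\}$ in $\calP(\RR^n)$. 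Your argument sidesteps this by working with the Markov identity for the process and using only tightness of $\{X^\veps_s\}_\veps$ together with the locally uniform convergence $u^{\veps,g}\to u^g$.

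One small point: the convergence $u^{\veps,g}\to u^g$ uniformly on compacts is delivered by Lemma~\ref{convunifcomp} only for $g\in\calC^u_b(\RR^n)$ (since the comparison principle, hence $u^*=u_*$, is stated for such initial data). Proposition~\ref{propdensity} extends the \emph{definition} of $P_tg$ to $g\in\calC_b(\RR^n)$ by density but does not, as written, assert $u^{\veps,g}\to P_tg$ for general $g\in\calC_b$. This does not affect your argument: restricting Step~2 to $g\in\calC^u_b(\RR^n)$ (or even $g\in C_c^\infty(\RR^n)$) already yields the Markov property with semigroup $(P_t)$, since such $g$ separate measures. Your last paragraph correctly identifies the only genuinely delicate passage, and the tightness-plus-local-uniform-convergence splitting you describe handles it.
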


The above theorem does not prevent the limit points from having different finite dimensional distributions. The following result says that all limit points share the same finite dimensional distribution.
\begin{theorem}
Under Assumptions~\ref{assu:pert} and~\ref{assu:zero}, for any $k\in \NN$, $\{f_i: \|f_i\|_\infty\leq 1\}_{i=1}^k \subset \calC_b(\RR^n)$ and $0\leq t_1<t_2<\ldots<t_k<T$, we have 
	\begin{align*}
	\lim_{\veps \to 0} \EE_{X^\veps} \left[ \prod_{i=1}^k f_i(X^\veps_{t_i})\right]= \EE_{X^*} \left[ \prod_{i=1}^k f_i(X^*_{t_i})\right],
	\end{align*}
	where, $X^*$ is the Feller process obtained in Theorem~\ref{th:limit}. 
\end{theorem}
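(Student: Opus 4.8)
The plan is to prove the convergence of the $k$-point functions by induction on $k$, using the semigroup/Markov structure that has already been established. The case $k=1$ is exactly Lemma~\ref{convunifcomp} together with Proposition~\ref{propdensity}: for $f_1\in\calC_b(\RR^n)$ we have $u^{\veps,f_1}\to u^{f_1}=P_{(\cdot)}f_1$, and since $X^*$ is the Feller limit obtained in Theorem~\ref{th:limit} whose backward Kolmogorov function is precisely the continuous viscosity solution $u^{f_1}$, evaluating at the appropriate time-shift and integrating against the law of $X^*_0\sim\mu$ gives $\EE_{X^\veps}[f_1(X^\veps_{t_1})]\to\EE_{X^*}[f_1(X^*_{t_1})]$. (One should be slightly careful that $u^{\veps}$ is written as an initial-value problem with $u^\veps(x,t)=\EE[f(X^\veps_T)\mid X^\veps_{T-t}=x]$, so the time-reversal bookkeeping must be done consistently; but this is routine given Theorem~\ref{th:FtoVS}.)

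For the inductive step, suppose the claim holds for $k-1$ factors. The key idea is to condition on the first time point $t_1$ and peel it off using the Markov (Feller) property of each $X^\veps$ and of $X^*$. Write
\begin{align*}
\EE_{X^\veps}\!\left[\prod_{i=1}^k f_i(X^\veps_{t_i})\right]
= \EE_{X^\veps}\!\left[ f_1(X^\veps_{t_1})\, g^\veps(X^\veps_{t_1})\right],
\qquad
g^\veps(x)\doteq \EE\!\left[\prod_{i=2}^k f_i(X^\veps_{t_i}) \,\Big|\, X^\veps_{t_1}=x\right],
\end{align*}
and similarly define $g^*$ for $X^*$. By the induction hypothesis applied to the shifted time points $t_2-t_1<\dots<t_k-t_1$ and to each fixed starting point, $g^\veps\to g^*$ pointwise; moreover $\|g^\veps\|_\infty\le\prod_{i=2}^k\|f_i\|_\infty\le 1$ uniformly in $\veps$. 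The remaining task is to pass to the limit in $\EE_{X^\veps}[f_1(X^\veps_{t_1})g^\veps(X^\veps_{t_1})]$: since $X^\veps_{t_1}$ converges in law (by tightness, Theorem~1.6, and uniqueness of the one-time marginal of the limit from the $k=1$ case) to $X^*_{t_1}$, and the integrand $f_1 g^\veps$ converges to $f_1 g^*$ with a uniform sup-norm bound, one wants to invoke a uniform-integrability / continuous-mapping argument. Because the limit function $f_1 g^*$ need not be continuous, the cleanest route is to instead argue at the level of the viscosity-solution/semigroup: by Proposition~\ref{propdensity} the operators $P_t$ form a well-defined family on $\calC_b(\RR^n)$ with $\|P_tf\|_\infty\le\|f\|_\infty$, and one shows that $g^*$ is itself obtained by iterating these operators, $g^* = P_{t_2-t_1}(f_2\, P_{t_3-t_2}(f_3\cdots))$, while $g^\veps$ is the analogous iterate of the $P^\veps_t$; then the uniform-on-compacts convergence $P^\veps_t h\to P_t h$ (Lemma~\ref{convunifcomp}, extended to $\calC_b$ by Proposition~\ref{propdensity}) propagates through the finitely many compositions, and a final application of the $k=1$ convergence for $f_1 g^*$ (now a bounded continuous function, being such an iterate) closes the induction.

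The main obstacle I anticipate is exactly this last point: controlling the composition of the (non-contractive-in-the-continuous-norm but sup-norm contractive) limit operators and making sure the convergence $P^\veps_t\to P_t$ is uniform enough, on the relevant compact sets, to survive $k-1$ nested applications against the functions $f_i$ — in particular verifying that each partial iterate $f_{j}\,P_{t_{j+1}-t_j}(\cdots)$ lands back in $\calC_b(\RR^n)$ so that Lemma~\ref{convunifcomp}/Proposition~\ref{propdensity} applies at the next stage, and that no mass escapes to infinity when integrating against the (tight) laws of $X^\veps_{t_j}$. A secondary technical point is justifying that $g^\veps$ is genuinely the claimed iterate of $P^\veps$, i.e. that the Feller/Markov property of $X^\veps$ lets us write the multi-time expectation as a nested single-time semigroup expression; this is standard for Feller processes but should be stated carefully, and the same must be said for $X^*$ using that it is Feller (Theorem~\ref{th:limit}).
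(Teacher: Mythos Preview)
Your proposal is correct and follows essentially the same approach as the paper: both write the multi-time expectation as a nested composition of the transition semigroups $P^\veps_t$ (the paper's $h^\veps_j$ are exactly your partial iterates $f_j\,P^\veps_{t_{j+1}-t_j}(f_{j+1}\cdots)$) and then propagate the uniform-on-compacts convergence of Lemma~\ref{convunifcomp} through the finitely many compositions. The obstacle you correctly flag---that the convergence must be uniform over initial points in compacts so that it survives integration at the next step---is precisely what the paper resolves by proving that $\{P^\veps(t,\cdot,x):\veps>0,\ x\in\calJ\}$ is relatively compact (hence tight) in $\calP(\RR^n)$, using the already-established uniform convergence $u^{\veps,g}\to u^{0,g}$ together with the Feller continuity of the limit $X^*$.
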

\begin{remark}
	In \cite{borkar2010new}, the authors showed that this result holds for $k\leq 2$ \emph{i.e.,} for any $f_1,f_2\in \calC_b(\RR^n)$, we have
	\begin{align}
	\lim_{\veps\to 0}	\EE_{X^\veps}\left[ f_1(X^\veps_s)f_2(X^\veps_t)\right]= \EE_{\bar{X}}\left[ f_1(\tilde X_s)f_2(\tilde X_t)\right], \; s\leq t
	\end{align}
	for every (weak) limit point $\tilde X$ of $\{X^\veps\}_{\veps>0}$.  
\end{remark}
\begin{proof}
	Let $u^{\veps,f}$ (and $u^{0,f}$) be the continuous viscosity solutions of~\eqref{bkep} with initial condition $f\in \calC_b(\RR^n)$ (and~\eqref{bke} with initial condition $f\in \calC_b(\RR^n)$).
	
	The only obstacle in proving the desired result for $k>2$  is to show that 
	$$ \lim_{\veps \to 0} \EE_{X^\veps} \left[ \prod_{i=1}^k f_i(X^\veps_{t_i})\right] \text{ exists} $$
	for any $\{f_i\}\in\calC_b(\RR^n)$ (set of bounded continuous functions on $\RR^n$) and $0\leq t_1<t_2<t_3<\ldots<t_k\leq T$. We will only show that the above limit exists and equals
	$$ \EE_{X^*}\left[ f_1(X^* _{t_1})f_2(X^* _{t_2})f_3(X^* _{t_3})\right].$$
	for $k=3$. Extension to $k>3$ case follows along the similar lines. 
	
	Let $P^\veps(t,A,y)$ (with $0\leq t<T$, $y\in \RR^n$ and $A\in \calB(\RR^n)$)  be the corresponding transition kernel for $X^\veps$. Using Lemma~\ref{comparison} and~\ref{convunifcomp} together, we know that for any $f\in \calC_b(\RR^n)$, 
	$$ u^{\veps,f}(x,t)=\int_{ \RR^n} f(y)P^\veps(t,dy, x)\to \bar u(x,t) \text{ uniformly on compact sets of $\calO^*$,}$$
where $\bar u$ is defined in Lemma~\ref{convunifcomp}.
	
For $0\leq t<T$, \cite[Theorem 3.1]{borkar2010new} gives us the existence of measure $P^0(t,dy,x)$ such that 
	$$ u^{0,f}(x,t)= \int_{\RR^n}f(y)P^0(t,dy,x).$$
	
	For $t_1>t_2>t_3$, consider 
 \begin{eqnarray*}
\lefteqn{\EE\left[ f_1(X^\veps _{t_1})f_2(X^\veps _{t_2})f_3(X^\veps _{t_3})\right] =}\\
&& \int f_1(x_1)P^\veps(t_2-t_1, dx_1,x_2)f_2(x_2)P^\veps(t_3-t_2,dx_2,x_3)f_3(x_3) P^\veps_{}(t_1,dx_1,x)\mu(dx).
\end{eqnarray*}
	Consider
$$	h^\veps_1(x_2,t_2) \ \doteq \	\int_{x_1\in \RR^n} f_1(x_1)P^\veps(t_2-t_1, dx_1,x_2)
	 \ = u^{\veps,f_1}(x_2,t_2). $$
	From Lemma~\ref{convunifcomp} , we know that 
	$$ h^\veps_1 \rightarrow h^0_1 \doteq \int_{x_1\in \RR^n} f_1(x_1)P^0(\cdot-t_1 , dx_1,\cdot ), \text{ uniformly on compact sets of $\calO^*$. }$$ 
	Now define
	\begin{align} h^\veps_2 (x_3,t_3)&\doteq \int_{x_2\in\RR^n} h^\veps_1 (x_2,t_2)f_2(x_2)P^\veps (t_3-t_2,dx_2,x_3)\\
	&= u^{\veps, (h^\veps_1 f_2)}(x_3,t_3)
	\end{align} 
	We note that $(h^\veps_1 f_2)$ converges uniformly on compact sets of $\RR^n$ to $h^0_1f_2$ and that for every $\delta>0$ and $0\leq t<T$, there is a compact set $\calK$ (depending on $t$) such that $P^\veps (t,\calK,x)>1-\delta$. Noting that $\|f_i(\cdot,t)\|_\infty \leq 1$ and $\|h_i(\cdot,t)\|_\infty\leq 1$, we have
	\begin{align*}
	&\left|u^{\veps, h^\veps_1 f_2 }(x_3,t_3)-u^{0, h^0_1f_2 }(x_3,t_3)\right|\\
	&\leq \left| \int_{x_2\in\calK} h^\veps_1 (x_2,t_2)f_2(x_2)P^\veps (t_3-t_2,dx_2,x_3)- \int_{x_2\in\calK} h^0_1 (x_2,t_2)f_2(x_2)P^0 (t_3-t_2,dx_2,x_3)\right|\\
	&\quad  + 2\delta\\
	&\leq \left|\int_{x_2\in\calK} h^\veps_1 (x_2,t_2)f_2(x_2)P^\veps (t_3-t_2,dx_2,x_3)-\int_{x_2\in\calK} h^0_1 (x_2,t_2)f_2(x_2)P^\veps (t_3-t_2,dx_2,x_3)\right| \\
	& +\left|\int_{x_2\in\calK} h^0_1 (x_2,t_2)f_2(x_2)P^\veps (t_3-t_2,dx_2,x_3) - \int_{x_2\in\calK} h^0_1 (x_2,t_2)f_2(x_2)P^0 (t_3-t_2,dx_2,x_3)\right|\\
	&\quad +2\delta.
	\end{align*} 
	The first  and second terms above go to zero as $\veps \to 0$ and as $\delta$ is arbitrary, we have 
	$$ h_2^\veps (x_3,t_3) \rightarrow h^0_2(x_3,t_3)\doteq \int_{x_2\in\RR^n} h^0_1 (x_2,t_2)f_2(x_2)P^0(t_3-t_2,dx_2,x_3).$$
	Since $t_1>t_2>t_3$ are fixed throughout and all the functions are evaluated only at one of these time instants, all convergence claims can be assumed to be uniform in them. So we only worry about uniform convergence in $x$, the first argument. From the above calculation, only pointwise convergence of $h_2^\veps(\cdot, t_3)$ can be inferred. We now show that 
the convergence of $h_2^\veps$ to $h^0_2$ is uniform on compact sets of $\RR^n$. Recall that  
	$$ u^{\veps,f}\rightarrow u^{0,f}, \text{ uniformly on compact sets of $\RR^n\times[0,T)$,}.$$
	for every $f\in \calC_b(\RR^n)$. 
	In other words, for every $\rho>0$ and compact $\calJ\subset \RR^n$, there is a $\veps_0>0$ such that for $\veps<\veps_0$, we have
	\begin{align*}
	|	\int_{\RR^n} f(y)P^\veps(t,dy,x)-\int_{\RR^n} f(y)P^0(t,dy,x)|<\rho, \text{ for $\forall x\in \calJ$.} 
	\end{align*}
	
	The following set is a relatively compact in $\calP(\RR^n)$:
	$$ \Pi\doteq \{ P^\veps (t, \cdot, x): \veps>0,\; x\in \calJ\}.$$
	Indeed, consider a sequence $(\veps_n,x_n)$ such that $\{\veps_n>0\}$ and $\{x_n\in \calJ\}$. Then there exists a subsequence (still denoted by $n$) $(\veps_{n},x_{n})$ such that $x_{n}\to x^* \in \calJ$ and there exists a sub-subsequence (still denoted by $n$) such that $P^{\veps_{n}} (t, \cdot, x^*)$ converges weakly to $P^{0} (t, \cdot, x^*)$. Along the subsequence $(\veps_{n},x_{n})$ and for $g\in \calC_b(\RR^n)$, consider
	\begin{align}
	&\left|	\int_{\RR^n} g(y)P^{\veps_n}(t,dy,x_n)-\int_{\RR^n} g(y)P^0(t,dy,x^*)\right|\\
	&\leq \left|	\int_{\RR^n} g(y)P^{\veps_n}(t,dy,x_n)-\int_{\RR^n} g(y)P^{0}(t,dy,x_n)\right|\\
	&\quad\quad+ \left|	\int_{\RR^n} g(y)P^{0}(t,dy,x_n)-\int_{\RR^n} g(y)P^0(t,dy,x^*)\right|.
	\end{align}
	As $n\to \infty$, second term on the right hand side goes to zero due to Feller continuity of $X^*$ and the first term goes to zero due to the uniform convergence on compact sets of $\RR^n$. Hence $\Pi$ is relatively compact. Thus for any $\delta>0$, there exists a compact set $\calK_{\delta,\calJ}  $ (only depending on $\delta$ and $\calJ$) such that 
	$$ P^\veps(t,\calK_{\delta,\calJ},x)>1-\delta, \text { for every $\veps>0$ and $x\in \calJ$} $$
	This finally shows that $h_2^\veps\rightarrow h_2^0$ uniformly on compact sets. Similarly, defining 
	$$ h_3^\veps( x)\doteq \int_{x\in \RR^n} h_2^\veps(x_3,t_3)f_3(x_3) P^\veps(t_3,dx_3,x)$$ and proceeding as above, we can conclude that 
	$$ h_3^\veps \rightarrow h_3^0 \doteq \int_{x\in \RR^n} h_2^0(x_3,t_3)f_3(x_3) P^0(t_3,dx_3,x) \text{ uniformly on compact sets of $\RR^n$}.$$ 
	We conclude that
	\begin{align}
	\lim_{\veps\to 0}\EE_{X^\veps}\left[ f_1(X^\veps _{t_1})f_2(X^\veps _{t_2})f_3(X^\veps _{t_3})\right] &= \EE_{X^*}\left[ f_1(X^* _{t_1})f_2(X^* _{t_2})f_3(X^* _{t_3})\right]\\
	&= \EE_{\bar{X}}\left[ f_1(\bar X _{t_1})f_2(\bar X _{t_2})f_3(\bar X _{t_3})\right],
	\end{align}
	where $\bar X$ is any other limit point of $X^\veps$.
	
	This proves the theorem.
\end{proof}
\begin{remark}
	From Lemma~\ref{comparison}, uniqueness of continuous viscosity solution of~\eqref{bke} and~\eqref{bkeic} follows. This means that among all the solutions of~\eqref{eq:sde}, there exists a unique Feller solution.
\end{remark}

Even though we have assumed appropriate H\"older continuity of $b$ and $\sigma$, we expect that just uniform continuity of $b$ and $\sigma$ might suffice for our results to hold. We however, could not provide a proof of this statement. Relaxing this condition further gives rise to cases where no Feller solution exists for the corresponding martingale problem. Here is one such counterexample.
\begin{counterexample}{\cite[Example 12.4.2]{stroock1979multidimensional}}
	For $n=1$, $\sigma\equiv 0$ and 
	\begin{align}
	b(x)=
	\begin{cases}&	\text{(sgn(x))}|x|^{\frac{1}{2}},\text{ for $|x|\leq 1$}\\
	&  1,\text{ for $x\geq 1$}	\\
	& - 1,\text{ for $x\leq -1$}
	\end{cases}
	\end{align}
	It can be easily seen that~\eqref{eq:sde} with the above coefficients does not have a Feller solution. In particular, the transition kernel fails to be continuous at $x=0$. For more such examples, see \cite{bafico1982small} and \cite[Pg. 746]{borkar2010new}. 
\end{counterexample}

 \appendix
 \section{}
 	\begin{lemma}\label{sum}
 	If $u$ and $v$ are  viscosity subsolutions of~\eqref{bke} and $v$ is $C^{1,2}((0,T)\times \RR^n)$, then $u+v$ is also a subsolution. 
 \end{lemma}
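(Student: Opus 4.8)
The plan rests on two elementary observations: the operator $\calL(M,p,x)=\tfrac12\mathrm{Tr}\{\sigma(x)\sigma(x)^\dagger M\}+b(x).p$ is linear in the pair $(M,p)$, and a $C^{1,2}$ viscosity subsolution is automatically a classical subsolution. First I would record the latter by testing $v$ against a small smooth perturbation of itself, say $\phi_\eps=v+\eps\|x-x_0\|^2+\eps(t-t_0)^2$: then $v-\phi_\eps$ has a strict local maximum at $(x_0,t_0)$, so the subsolution property of $v$ gives $\partial_t v(x_0,t_0)-\calL(D^2v(x_0,t_0)+2\eps\I,\,Dv(x_0,t_0),\,x_0)\le 0$, and letting $\eps\downarrow 0$ (using continuity of $\calL$ in $M$) yields $\partial_t v-\calL(D^2v,Dv,x)\le 0$ at every $(x,t)\in\calO$. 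I would also note that $u+v$ is upper semicontinuous on $\calO$ since $v$ is continuous there.

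The main step is to use the equivalent test-function formulation of the subsolution property (\cite[Remark I.9]{lions1983optimal}, the same reduction invoked in the proof of Proposition~\ref{prop:subsol}): it suffices to show that for every $C^{1,2}$ function $\phi$ and every local maximum $(x_0,t_0)\in\calO$ of $(u+v)-\phi$, one has $\partial_t\phi(x_0,t_0)-\calL(D^2\phi(x_0,t_0),D\phi(x_0,t_0),x_0)\le 0$. Given such a $\phi$, the function $\phi-v$ is again $C^{1,2}$ and $u-(\phi-v)=(u+v)-\phi$ has a local maximum at $(x_0,t_0)$, so applying the subsolution property of $u$ with test function $\phi-v$ gives
$$\partial_t\phi(x_0,t_0)-\partial_t v(x_0,t_0)-\calL\big(D^2\phi(x_0,t_0)-D^2v(x_0,t_0),\,D\phi(x_0,t_0)-Dv(x_0,t_0),\,x_0\big)\le 0.$$
Adding to this the classical subsolution inequality for $v$ at $(x_0,t_0)$ from the first step and invoking the linearity of $\calL$ in its first two arguments collapses the left side to $\partial_t\phi(x_0,t_0)-\calL(D^2\phi(x_0,t_0),D\phi(x_0,t_0),x_0)$, which is therefore $\le 0$. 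This is exactly what the test-function criterion requires for $u+v$.

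Equivalently, one could argue directly with the semijets of Definition~\ref{defn}: given $(a,p,X)\in\calP_\calO^{2,+}(u+v)(x_0,t_0)$, subtracting the second-order parabolic Taylor expansion of the $C^{1,2}$ function $v$ at $(x_0,t_0)$ shows that $(a-\partial_t v(x_0,t_0),\,p-Dv(x_0,t_0),\,X-D^2v(x_0,t_0))\in\calP_\calO^{2,+}u(x_0,t_0)$, and one concludes by combining~\eqref{subsol} for $u$ with the classical inequality for $v$ exactly as above. I do not expect a genuine obstacle here; the only points that need a line of justification are the linearity of $\calL$ in $(M,p)$ — immediate from its definition — and the standard fact that a $C^{1,2}$ viscosity subsolution satisfies the equation pointwise in the classical sense (see \cite[Section 8]{crandall1992user}), which the test-function route above makes routine. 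Finally, consistently with the convention recorded after Definition~\ref{defn}, the statement concerns only the differential inequality~\eqref{subsol} on $\calO$; no initial datum enters, which is in any case forced by $v$ being assumed $C^{1,2}$ only on $(0,T)\times\RR^n$.
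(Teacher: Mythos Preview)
Your proof is correct and follows essentially the same idea as the paper: subtract the smooth function $v$ from a test element for $u+v$ to obtain a test element for $u$, then add back the classical subsolution inequality for $v$ using the linearity of $\calL$ in $(M,p)$. The paper carries this out directly in the semijet formulation (your final paragraph), while you lead with the equivalent test-function version; the content is the same.
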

 
 \begin{proof} 	
 	From the definition of viscosity subsolution, we know that when $\calP_\calO^{2,+} u(x,t) \neq \phi$,
 	\begin{align} \label{eq:subsol}
 	a- \calL(X,p,x)\leq 0, \text{ for $(t,x)\in (0,T)\times \RR^n$ and $(a,p,X)\in \calP_\calO^{2,+} u(x,t)$},
 	\end{align}
  	\begin{align}\label{eq:supsol}
 	b- \calL(Y,q,x)\leq  0, \text{ for $(t,x)\in (0,T)\times \RR^n$ and $(b,q,Y)\in \calP_\calO^{2,+} v(x,t)$}
 	\end{align}
 	From the differentiability of $v$ and definition of $\calP_\calO^{2,+}$, it is clear that $\calP_\calO^{2,+} v(x,t)\neq  \emptyset$, for every $(x,t)\in (0,T)\times \RR^n$ and more importantly, is a singleton. Let $(c,r,Z)\in \calP_\calO^{2,+}(u+v)(x,t)$. It is clear that $(c-b, r-q, Z-Y)\in \calP_\calO^{2,+} u(x,t)$ and 
 	\begin{align*}
 	c-b-\calL(Z-Y,r-q,x)\leq 0, \\
 	c-\calL(Z,r,x)\leq b-\calL(Y,q,x)\leq 0, 
 	\end{align*}
 	from~\eqref{subsol} and the linearity of $\calL$. Finally, from the definition of viscosity subsolution, we have the result.	
 \end{proof} 
 In the rest of the appendix, we assume the conditions of the statement of Lemma~\ref{comparison}. Let $M_{\alpha, \beta, \rho}$ be as defined in~\eqref{def:M} and $(\hat x,\hat y, \hat t)$ is the maximizer of $M_{\alpha, \beta, \rho}$ on ${\RR^n}\times{\RR^n}\times [0,T]$.

 \begin{lemma}\label{auxifun}\cite[Pg. 29]{zhan2000viscosity}
 Suppose~\eqref{contC1} holds. Then there exists  $\beta_0$ and $\rho_0$ such that for every $\forall\alpha>0$, $\beta<\beta_0$ and $\rho<\rho_0$, we  have the following: 	
	$$ \sup_{{\RR^n}\times{\RR^n}\times [0,T]}M_{\alpha, \rho,\beta}>\eta>0.$$
 \end{lemma}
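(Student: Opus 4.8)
\emph{Proof proposal.} The plan is to prove the bound by a direct construction: \eqref{contC1} forces $\sup_{\calO^*}[u-v]$ to be strictly positive, and I will exhibit a single admissible test point at which $M_{\alpha,\rho,\beta}$ stays bounded below by a fixed positive constant, for \emph{all} $\alpha>0$ and all sufficiently small $\beta,\rho$.

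First I would record that $S\doteq\sup_{\calO^*}[u-v]$ is finite, since $u$ and $v$ are bounded, and that, by \eqref{contC1} together with $\sup_{x}\{[u(x,0)-v(x,0)]\vee 0\}\ge 0$, it satisfies $S>\delta>0$. By definition of the supremum over $\calO^*=\RR^n\times[0,T)$, I can then fix a point $(x_0,t_0)\in\RR^n\times[0,T)$ with
$$ u(x_0,t_0)-v(x_0,t_0)\ >\ S-\frac{\delta}{4}; $$
in particular $t_0<T$, so $T-t_0>0$.

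The key step is to evaluate $M_{\alpha,\rho,\beta}$ along the diagonal, i.e.\ at $(x,y,t)=(x_0,x_0,t_0)$: the penalty $\alpha\|x-y\|^2$ then vanishes, which is exactly what makes the resulting lower bound uniform in $\alpha$. Recalling $u^\rho=u-\rho/(T-t)$ and the definition~\eqref{def:M}, this gives
$$ M_{\alpha,\rho,\beta}(x_0,x_0,t_0)\ =\ u(x_0,t_0)-v(x_0,t_0)-\frac{\rho}{T-t_0}-\beta\|x_0\|^2. $$
I would then set $\eta\doteq\delta/2$, $\rho_0\doteq(T-t_0)\,\delta/8$ and $\beta_0\doteq\delta/\bigl(8(1+\|x_0\|^2)\bigr)$, so that for every $\alpha>0$ and every $\rho<\rho_0$, $\beta<\beta_0$ one has $\rho/(T-t_0)<\delta/8$ and $\beta\|x_0\|^2<\delta/8$, whence
$$ \sup_{\RR^n\times\RR^n\times[0,T]}M_{\alpha,\rho,\beta}\ \ge\ M_{\alpha,\rho,\beta}(x_0,x_0,t_0)\ >\ \Bigl(S-\frac{\delta}{4}\Bigr)-\frac{\delta}{8}-\frac{\delta}{8}\ =\ S-\frac{\delta}{2}\ >\ \frac{\delta}{2}\ =\ \eta\ >\ 0. $$

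There is no serious obstacle in this argument; the only points requiring care are: using the diagonal $x=y$ so the estimate does not deteriorate as $\alpha\to\infty$; using $t_0<T$ so the blow-up term $\rho/(T-t)$ is harmless at the chosen point once $\rho$ is small; and reading off $S<\infty$ from boundedness of $u,v$ and $S>\delta$ from \eqref{contC1}. Note that $(x_0,t_0)$, hence $\rho_0$ and $\beta_0$, can be chosen depending only on $u,v$ and $\delta$, which is consistent with the order of quantifiers ($\forall\alpha$, then $\beta<\beta_0$, $\rho<\rho_0$) in the statement.
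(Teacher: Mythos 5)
Your proof is correct, and it takes a cleaner route than the paper's. The paper works with the doubled-variable quantity $\gamma=\lim_{r\to 0}\sup\{u(x,t)-v(y,t):\|x-y\|<r\}$ and picks a \emph{near}-diagonal test point $(x_0,y_0,t_0)$ with $\|x_0-y_0\|<r$, where $r<\sqrt{\eps/(2\alpha)}$ is chosen so that the penalty $\alpha\|x_0-y_0\|^2$ is small; this makes the test point, and in principle the thresholds $\beta_0,\rho_0$ (which depend on $t_0$ and $\|x_0\|$), depend on $\alpha$, which sits awkwardly with the quantifier order ``$\exists\,\beta_0,\rho_0$ $\forall\alpha$'' in the statement. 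Your evaluation exactly on the diagonal $x=y$ kills the $\alpha$-penalty identically, so one fixed $(x_0,t_0)$ and hence fixed $\beta_0,\rho_0$ work for every $\alpha>0$ — a genuine tightening. The one thing the paper's version buys is the intermediate estimate~\eqref{a1}, i.e.\ $\sup M_{\alpha,\beta,\rho}\geq\sup_{\calO^*}[u-v]-5\epsilon$, which is quoted again in Case~1 of the proof of Lemma~\ref{comparison}; but your argument yields the same bound by choosing $(x_0,t_0)$ within $\epsilon$ of the supremum of $u-v$ rather than within $\delta/4$, so nothing downstream is lost.
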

\begin{proof}
	Let 
	$$\gamma \doteq \lim_{r\to0}\sup\left\{ u(x,t)-v(y,t): \|x-y\|<r, (x,y,t) \in  \RR^n\times \RR^n \times [0,T]\ \right\}.$$
	From~\eqref{contC1}, clearly we have,  
	$$ \gamma \geq \sup_{ \RR^n\times [0,T]}\left[u(x,t)-v(x,t)\right]> \sup_{\RR^n}\left\{\left[u(x,0)-v(x,0)\right]\vee 0\right\}\doteq M_b.$$
	
	For $\epsilon>0$, there is $r_0$ such that for $r<r_0$, we 
	have
	$$\sup\left\{ u(x,t)-v(y,t): \|x-y\|<r, ( x,y,t) \in \RR^n\times \RR^n \times [0,T]\ \right\}>\gamma-\epsilon $$
		To ensure that $\alpha\|x_0-y_0\|^2$ is small, choose $r< \min\{\sqrt{\frac{\eps}{2\alpha}},r_0\}$. Now there exists $(x_0,y_0,t_0)\in \RR^n\times \RR^n \times [0,T]$ such that 
	$$ u(x_0,t_0)-v(y_0,t_0)+\epsilon >\gamma-\epsilon \text{ and } \alpha\|x_0-y_0\|^2<\eps.$$
	It is also clear that there exists $\beta_0$ and $\rho_0$ such that $\frac{\rho}{T-t_0}<\eps$ and $\beta \|x_0\|^2<\eps$, for every $\beta<\beta_0$ and $\rho<\rho_0$. To summarize, we have shown that 
	\begin{align} 
	M_{\alpha, \beta, \rho}(x_0,y_0,t_0)&= u(x_0,t_0)-v(y_0,t_0)-\alpha\|x_0-y_0\|^2-\frac{\rho}{T-t_0}-\beta \|x_0\|^2\\\label{a1}
	&> \gamma-5\eps\\
	&>M_b, \text{ for small enough $\eps$.}
	\end{align}
	This concludes the proof.

\end{proof}

	Note that $\hat x$, $\hat y$ and $\hat t$ depend on $\alpha$, $\beta$ and $\rho$. We then have:

\begin{lemma}\label{lem:a}
	$$ \lim_{\alpha\to\infty} \varlimsup_{\beta,\rho\to 0} \alpha \|\hat x -\hat y\|^2 =0.$$

\end{lemma}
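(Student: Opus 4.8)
The plan is to run the classical doubling-of-parameters argument that underlies comparison proofs for viscosity solutions (cf.\ \cite{crandall1992user} and \cite[Pg.~916]{ishii1994uniqueness}), keeping careful track of all three parameters $\alpha,\beta,\rho$ in $M_{\alpha,\beta,\rho}$. Write $m_{\alpha,\beta,\rho}\doteq\sup_{\RR^n\times\RR^n\times[0,T)}M_{\alpha,\beta,\rho}=M_{\alpha,\beta,\rho}(\hat x,\hat y,\hat t)$; from the boundedness of $u$ and $v$ this is finite and bounded above uniformly in the parameters. First I would record the monotonicity of $m_{\alpha,\beta,\rho}$: it is nonincreasing in $\alpha$ (increasing $\alpha$ subtracts more), and nondecreasing as $\beta\downarrow0$ and as $\rho\downarrow0$ (decreasing $\beta$ or $\rho$ subtracts less). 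Consequently $m_\alpha\doteq\lim_{\beta,\rho\to0}m_{\alpha,\beta,\rho}=\sup_{\beta,\rho>0}m_{\alpha,\beta,\rho}$ exists, is itself nonincreasing in $\alpha$, and is bounded below by $\sup_{\calO^*}[u-v]$ (evaluate $M_{\alpha,\beta,\rho}$ on the diagonal $x=y$ and let $\beta,\rho\to0$); hence $m_\infty\doteq\lim_{\alpha\to\infty}m_\alpha$ exists.

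The crux of the argument would be the inequality obtained by inserting the maximizer of $M_{\alpha,\beta,\rho}$ into $M_{2\alpha,\beta,\rho}$. Since changing $\alpha$ to $2\alpha$ leaves the $\beta$- and $\rho$-terms untouched, $M_{2\alpha,\beta,\rho}(x,y,t)=M_{\alpha,\beta,\rho}(x,y,t)-\alpha\|x-y\|^2$, so that
\[
m_{2\alpha,\beta,\rho}\ \ge\ M_{2\alpha,\beta,\rho}(\hat x,\hat y,\hat t)\ =\ m_{\alpha,\beta,\rho}-\alpha\|\hat x-\hat y\|^2 ,
\]
and therefore $0\le\alpha\|\hat x-\hat y\|^2\le m_{\alpha,\beta,\rho}-m_{2\alpha,\beta,\rho}$. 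Taking $\varlimsup_{\beta,\rho\to0}$ and using that both $m_{\alpha,\beta,\rho}$ and $m_{2\alpha,\beta,\rho}$ converge as $\beta,\rho\to0$ yields $\varlimsup_{\beta,\rho\to0}\alpha\|\hat x-\hat y\|^2\le m_\alpha-m_{2\alpha}$. Finally, letting $\alpha\to\infty$ the right-hand side tends to $m_\infty-m_\infty=0$; since the left-hand side is nonnegative, the squeeze gives $\lim_{\alpha\to\infty}\varlimsup_{\beta,\rho\to0}\alpha\|\hat x-\hat y\|^2=0$, which is the assertion.

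The few points requiring care — this is the ``main obstacle'', modest as it is — are all bookkeeping: that the maximum of $M_{\alpha,\beta,\rho}$ is genuinely attained on $\RR^n\times\RR^n\times[0,T)$ (the coercive term $-\beta\|x\|^2$ together with the semicontinuity of $u,v$, as already used in the proof of Lemma~\ref{comparison}); that $\{m_{\alpha,\beta,\rho}\}$ is uniformly bounded so that the nested monotone limits $m_\alpha$ and $m_\infty$ exist; and that the $\varlimsup$ over the pair $(\beta,\rho)$ interacts correctly with the monotone passage to the limit — here it is convenient to note that $m_{\alpha,\beta,\rho}\uparrow m_\alpha$ as $(\beta,\rho)\downarrow(0,0)$, so the $\varlimsup$ is in fact an honest limit. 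No viscosity-solution machinery, no use of Assumption~\ref{assu:zero}, and no regularity of $b,\sigma$ beyond boundedness is needed; only the defining property of the maximizer and elementary monotonicity enter.
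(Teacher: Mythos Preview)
Your proof is correct and follows essentially the same approach as the paper's: both bound $\alpha\|\hat x-\hat y\|^2$ by a difference of the form $m_{\text{(smaller penalty)}}-m_{\text{(larger penalty)}}$ and then use monotonicity and boundedness of $m$ to conclude that this difference vanishes in the iterated limit. The only cosmetic difference is that the paper halves all three parameters simultaneously (obtaining $\frac{\alpha}{2}\|\hat x-\hat y\|^2+\frac{\beta}{2}\|\hat x\|^2+\frac{\rho}{2(T-\hat t)}\le m_{\alpha/2,\beta/2,\rho/2}-m_{\alpha,\beta,\rho}$) and invokes Lemma~\ref{auxifun} for the lower bound, whereas you double only $\alpha$ and supply the lower bound $m_\alpha\ge\sup_{\calO^*}[u-v]$ directly; your version is arguably a shade cleaner and does not rely on the contradiction hypothesis~\eqref{contC1}.
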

\begin{proof}
	From Lemma~\ref{auxifun}, $\sup_{\RR^n\times \RR^n \times [0,T]}M_{\alpha,\beta,\rho}>0$ and this along with the boundedness of $u$, $v$ means that 
	$$ \alpha\|\hat x-\hat y\|^2 +\beta\|\hat x\|^2 + \frac{\rho}{T-\hat t}\leq \|u\|+\|v\|\doteq M.$$
	This immediately gives us
	\begin{align*}
	\|\hat x-\hat y\|\leq \sqrt{\frac{M}{\alpha}}, \; \beta\|\hat x\|^2\leq M \text{ and } \frac{\rho}{T-\hat t}\leq M, 
	\end{align*}
	then
	\begin{align*}
\sup_{\RR^n\times \RR^n\times[0,T]}M_{\alpha,\beta,\rho}-\sup_{\RR^n\times \RR^n\times[0,T]}M_{\frac{\alpha}{2},\frac{\beta}{2},\frac{\rho}{2}}&\leq 
	M_{\alpha,\beta,\rho}(\hat x,\hat y, \hat t)- M_{\frac{\alpha}{2},\frac{\beta}{2},\frac{\rho}{2}}(\hat x,\hat y,\hat t)\\
	&= -\frac{\alpha}{2}\|\hat x-\hat y\|^2 -\frac{\beta}{2}\|\hat x\|^2 -\frac{\rho}{2(T-\hat t)}
	\end{align*} 
	From Lemma~\ref{auxifun}, we know that $M_{\alpha,\beta,\rho}$ is bounded away from zero from below  uniformly for small enough $\beta$ and $\rho$ and all $\alpha$. From monotone convergence theorem, 
	$$ \lim_{\alpha\to \infty } \lim_{\beta,\rho\to 0}\left( \alpha \|\hat x -\hat y\|^2 + \beta\|\hat x\|^2 + \frac{\rho}{T-\hat t}\right)=0$$	
	and we are done.
\end{proof}
\begin{lemma}
	For $u, v$ as in Lemma~\ref{comparison}, there exists  $\alpha_0$, $\beta_0$ and $\rho_0$ such that for every $\forall\alpha>\alpha_0$, $\beta<\beta_0$ and $\rho<\rho_0$, we  have the following: 	
	$$ \sup_{{\RR^n}\times{\RR^n}\times [0,T]}M_{\alpha, \rho,\beta} > \sup_{\RR^n}\left\{[u(x,0)-v(x,0)]\vee 0\right\}.$$
\end{lemma}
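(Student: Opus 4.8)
The plan is to read the inequality directly off the diagonal $\{x=y\}$ of $M_{\alpha,\rho,\beta}$, on which the term $-\alpha\|x-y\|^2$ simply vanishes, so that $\alpha$ plays no role at all. Recall that in this part of the appendix we are in the situation of Case~1 of Lemma~\ref{comparison}, so the contrary hypothesis~\eqref{contC1} to~\eqref{eq:comp} is in force: writing $M_b\doteq\sup_{x\in\RR^n}\{[u(x,0)-v(x,0)]\vee 0\}$, we have $\sup_{\calO^*}[u-v]-\delta>M_b$ for some $\delta>0$. First I would use this to fix a point $(x_1,t_1)\in\calO^*=\RR^n\times[0,T)$ with
\[
u(x_1,t_1)-v(x_1,t_1)>M_b+\tfrac{\delta}{2},
\]
noting in particular that $t_1<T$, so $T-t_1>0$.

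Next, observe that $M_{\alpha,\rho,\beta}(x,x,t)=u(x,t)-v(x,t)-\tfrac{\rho}{T-t}-\beta\|x\|^2$ contains no $\alpha$ whatsoever. Hence, choosing $\rho_0\doteq\tfrac{\delta(T-t_1)}{4}$ and $\beta_0\doteq\tfrac{\delta}{4(1+\|x_1\|^2)}$, and $\alpha_0$ arbitrary (e.g.\ $\alpha_0\doteq 0$), I would estimate, for every $\alpha>\alpha_0$, $\rho<\rho_0$, $\beta<\beta_0$,
\begin{align*}
\sup_{\RR^n\times\RR^n\times[0,T]}M_{\alpha,\rho,\beta}
&\ge M_{\alpha,\rho,\beta}(x_1,x_1,t_1)
= u(x_1,t_1)-v(x_1,t_1)-\tfrac{\rho}{T-t_1}-\beta\|x_1\|^2\\
&> M_b+\tfrac{\delta}{2}-\tfrac{\delta}{4}-\tfrac{\delta}{4}=M_b,
\end{align*}
which is exactly the claimed bound. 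One could equivalently invoke the point $(x_0,y_0,t_0)$ already constructed inside the proof of Lemma~\ref{auxifun}, for which the estimate $M_{\alpha,\beta,\rho}(x_0,y_0,t_0)>M_b$ comes out of~\eqref{a1}; the diagonal argument above is just the self-contained version and makes transparent that $\alpha$ is irrelevant.

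I do not expect a genuine obstacle here: the statement is, in effect, Lemma~\ref{auxifun} with the explicit lower bound $M_b$ in place of an anonymous $\eta>0$. The only points requiring a little care are (i) that $\calO^*=\RR^n\times[0,T)$, so a near-maximizer of $u-v$ can be taken with time coordinate strictly below $T$, which keeps $\rho/(T-t_1)$ finite and controllable; and (ii) that restricting to $x=y$ removes the $\alpha$-dependence, so the inequality actually holds for \emph{every} $\alpha>0$, which is why the choice of $\alpha_0$ is immaterial.
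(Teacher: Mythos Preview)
Your argument is correct. Evaluating $M_{\alpha,\rho,\beta}$ on the diagonal $(x_1,x_1,t_1)$ at a near-supremizer of $u-v$ kills the $\alpha$-term and yields fixed thresholds $\beta_0,\rho_0$; this is essentially the content of the estimate~\eqref{a1} inside Lemma~\ref{auxifun}, but stripped of the near-diagonal machinery and with the role of $\alpha$ made transparent.

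The paper's proof takes a genuinely different route. It argues by contradiction and invokes the uniform continuity of $v(\cdot,0)$ (or $u(\cdot,0)$): assuming a sequence $(\alpha_n,\beta_n,\rho_n)\to(\infty,0,0)$ along which the maximizer has $\hat t=0$, it chains
\[
M_b<M_{\alpha_n,\beta_n,\rho_n}(\hat x,\hat y,0)\le u(\hat x,0)-v(\hat y,0)\le M_b+\big(v(\hat x,0)-v(\hat y,0)\big),
\]
and then uses Lemma~\ref{lem:a} to send $\|\hat x-\hat y\|\to 0$, forcing $M_b<M_b$. Note that the opening inequality $M_b<\sup M$ is here \emph{used} (via Lemma~\ref{auxifun}) rather than proved, so the paper's argument is really establishing that $\hat t>0$ for the maximizer---which is exactly what Case~1 of Lemma~\ref{comparison} needs in order to apply the parabolic Ishii lemma. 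Your diagonal argument supplies the stated inequality directly and for every $\alpha>0$, but does not by itself locate $\hat t$ away from $0$; the paper's contradiction argument buys that extra conclusion at the price of requiring one of the initial data to be uniformly continuous.
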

\begin{proof}
	Suppose $v$ is uniformly continuous. Assume the contrary to the statement of the lemma. Then there exists a sequence $(\alpha_n,\beta_n,\rho_n) \to (\infty, 0 ,0)$ such that the corresponding $\hat t=0$ (as $\hat t\neq T$ because of the term involving $\rho$). It is easy to see from definition of $M_{\alpha,\beta,\rho}$, Lemma~\ref{auxifun} and Lemma~\ref{lem:a} that
	\begin{align*}
	M_b<M_{\alpha_n,\beta_n,\rho_n}(\hat x,\hat y,0)&\leq u(\hat x,0)-v(\hat y,0)\\
	&\leq  v(\hat x,0)-v(\hat y,0)+ \sup_{\RR^n}\left\{[u(\hat{x},0)-v(\hat{x},0)]\vee 0\right\}.
\end{align*}
Letting $n \to \infty$ and using uniform continuity of $v$, we have 	$M_b < M_b$,	a contradiction.
\end{proof}
\noindent \textbf{Acknowledgement:} {The work of ASR is funded by Institute Postdoctoral Fellowship, Indian Institute of Technology, Bombay. The work of VSB was supported in part by an S.\ S.\ Bhatnagar Fellowship from the Council for Scientific and Industrial Research, Government of India.}

	\bibliographystyle{aomplain}	
	\bibliography{Uniqueness_viscosity_solutions}

\end{document}